\documentclass[10pt,twoside]{amsart}
\usepackage{charter, amsmath,amsbsy,amsfonts,amssymb,
stmaryrd,amsthm,mathrsfs,graphicx, amscd, tikz-cd, cancel, array, tabulary}
\usepackage{srcltx}
\usepackage[mathscr]{eucal}

\usepackage[
	hypertexnames=false,
	hyperindex,
	pagebackref,
	pdftex,
	breaklinks=true,
	bookmarks=false,
	colorlinks,
	linkcolor=blue,
	citecolor=red,
	urlcolor=red,
]{hyperref}

\def\CC{{\mathbb C}}

\def\HH{{\mathbb H}}

\def\PP{{\mathbb P}}
\def\QQ{{\mathbb Q}} 
\def\RR{{\mathbb R}}

\def\XX{{\mathbb X}} 
\def\ZZ{{\mathbb Z}} 

\def\g{{\gamma}} 

\newcommand\ssm{\smallsetminus}

\def\G{{\Gamma}}
\def\Acal{{\mathcal A}}

\def\Ccal{{\mathcal C}}
\def\Cfrak{{\mathfrak C}}

\def\Ccal{{\mathcal C}}

\def\Gcal{{\mathcal G}}

\def\Hcal{{\mathcal H}}

\def\Iscr{{\mathscr I}}

\def\Jscr{{\mathscr J}}

\def\Mcal{{\mathcal M}} 
\def\Mcalbar{\overline{\Mcal}}

\def\Pcal{{\mathcal P}}
\def\Pscr{{\mathscr P}}

\def\Scal{{\mathcal S}} 
\def\Sfrak{{\mathfrak S}}

\def\Vcal{{\mathcal V}}

\def\Tscr{{\mathscr T}}

\def\pfrak{{\mathfrak p}}

\def\qfrak{{\mathfrak q}}

\def\Tfrak{{\mathfrak T}}
\def\Ucal{{\mathcal U}}
\def\Ufrak{{\mathfrak U}}
\def\ufrak{{\mathfrak u}}
\def\Vfrak{{\mathfrak V}}
\def\Wfrak{{\mathfrak W}}

\def\gfrak{{\mathfrak g}}

\def\bb{{\mathit bb}}

\def\vor{{\rm Vor}}

\def\la{\langle}
\def\ra{\rangle}

\def\perf{{\rm perf}}
\def\bs{\backslash}
\def\half{\frac{1}{2}}
\def\pt{{\scriptscriptstyle\bullet}}

\newcommand\aut{\operatorname{Aut}}
\newcommand\diff{\operatorname{Diff}}
\newcommand\hocolim{\operatorname{hocolim}}
\newcommand\hyp{\mathcal{\Hcal}\!\mathit{yp}} 
 
\newcommand\inn{\operatorname{In}}

\newcommand\re{\operatorname{Re}}
\newcommand\spec{\operatorname{Spec}}

\newcommand\Lie{\operatorname{Lie}}
\newcommand\GL{\operatorname{GL}}

\newcommand\GLfrak{\operatorname{\mathfrak{gl}}}
\newcommand\SL{\operatorname{SL}}

\newcommand\Sp{\operatorname{Sp}}
\newcommand\symp{\operatorname{\mathfrak{sp}}}

\newcommand\Spcal{\operatorname{{\mathcal S}\!{\mathit p}}}

\newcommand\Star{\operatorname{Star}}
\newcommand\sym{\operatorname{Sym}}
\newcommand\Hom{\operatorname{Hom}}

\newcommand\Sq{\operatorname{Sq}}
\newcommand\rad{\operatorname{rad}}
\newcommand\rk{\operatorname{rk}}


\newtheorem{theorem}{Theorem}[section] 
\newtheorem{lemma}[theorem]{Lemma}

\newtheorem{proposition}[theorem]{Proposition}
\newtheorem{scholium}[theorem]{Scholium}

\theoremstyle{definition} \newtheorem{definition}[theorem]{Definition}
\newtheorem{example}[theorem]{Example}

\theoremstyle{remark} \newtheorem{remark}[theorem]{Remark}

\begin{document}


\title{The homotopy type of the Baily-Borel and allied compactifications}

\author{Jiaming Chen}
\address{Universit\'e Pierre et Marie Curie, 4 Place Jussieu, 75005 Paris (France).} 
\email{jiaming.chenl@gmail.com}
\author{Eduard Looijenga}
\address{Yau Mathematical Sciences Center, Tsinghua University, Beijing (China) and Mathematisch Instituut Universiteit, Utrecht (Nederland)}
\email{eduard@math.tsinghua.edu.cn}

\begin{abstract}
A number of compactifications familiar in complex-analytic geometry, in particular the Baily-Borel compactification and its toroidal variants,  as well as the Deligne-Mumford compactifications, can be covered by open subsets whose nonempty intersections are $K(\pi,1)$'s. We exploit this fact to  define a `stacky homotopy type' for these spaces as the homotopy type of  a small category. We thus  generalize an old result of Charney-Lee on the Baily-Borel compactification of $\Acal_g$ and  recover (and rephrase) a more recent one of Ebert-Giansiracusa on the Deligne-Mumford compactifications. We also describe an extension of the period map for Riemann surfaces (going from the Deligne-Mumford compactification to the Baily-Borel compactification of the moduli space of principally polarized varieties)  in these terms.
\end{abstract}

\maketitle

\section*{Introduction}
In a remarkable, but seemingly little noticed  paper \cite{charneylee:satake} Charney and Lee described a rational homology equivalence between 
the Satake-Baily-Borel compactification of the moduli space of principally polarized abelian varieties $\Acal_g$,  denoted here by 
$\Acal_g^\bb$, and the classifying space of a certain category which has its origin in Hermitian $K$-theory. They exploited this to show that if we let 
$g\to \infty$, the homotopy type of this classifying space (after applying the `plus construction') stabilizes and they computed its stable rational cohomology.  

Our aim was twofold: first, to put the results of that paper in a transparent  framework that lends itself to generalization, and second, to make a clearer link with  algebraic geometry. During our efforts we found that we could obtain the stable rational cohomology of the spaces $\Acal_g^\bb$  by means of relatively  conventional methods in algebraic geometry, leading us even to a determination of the mixed Hodge types of the stable classes. As this involved no category theory and hardly any homotopy theory,  we decided to put this in a separate paper \cite{chen-looijenga}. By contrast, the focus of the present article is on homotopy types and may be regarded as our proposal for accomplishing the first goal. 

Baily-Borel compactifications and Deligne-Mumford compactifications have in common that they can be obtained as orbit spaces of stratified spaces
with respect to an action of a discrete group.  But usually the stratification is not locally finite, the space not locally compact  and the action of the group  not proper, and yet,  these drawbacks somewhat miraculously cancel each other out when we pass to the  orbit space, which after all, is a compact Hausdorff space. But a feature that 
they have in common is that the strata are contractible. This  leads (in a not quite trivial manner) to an open covering of the orbit space that is 
closed under finite intersections and whose members  are Eilenberg-MacLane spaces. One of the main results of the  paper (Theorem \ref{thm:emcov2}) formalizes the type of input under which such a structure is present and then yields as output  (what we have called) the \emph{stacky}  homotopy type of the orbit space as one of the classifying space of a category. (This terminology may be somewhat misleading as we have not been able to define a stack of which this is the homotopy type, see Remark \ref{rem:stacky}  for discussion.) Our set-up is reminiscent of---and indeed, inspired by---the construction of an \'etale homotopy type.  We illustrate its efficiency by showing 
how we thus recover with little additional effort a theorem of Ebert and Giansiracusa  on the homotopy type of a Deligne-Mumford compactification as a 
stack (theirs in turn generalized another  theorem of Charney and Lee). 

Another application, and one that is more central to this paper, concerns an arbitrary  Baily-Borel compactification,
and yields a stacky homotopy type for such a space. The proof that the hypotheses of   Theorem \ref{thm:emcov2} are then  satisfied requires a good understanding of the topology of  the Satake extension of a bounded symmetric domain `with a $\QQ$-structure'. Although all we need is in a sense known, it is not so easy to winnow the relevant facts from the literature and so we have tried to present this as a geometric narrative, avoiding  any mention of root data (despite Mumford calling these in \cite{amrt} `the name of the game'). Unlike Charney and Lee we do not make use of  Borel-Serre's compactification `with corners'.  We then combine our results for $\overline\Mcal_g$ and  $\Acal^\bb_g$ to show how the stacky homotopy type of the period map extension $\overline\Mcal_g\to\Acal^\bb_g$ can be given by the classifying space construction applied to a functor. 

Finally we show that Theorem \ref{thm:emcov2} also applies to the toroidal compactifications of Ash, Mumford, Rapoport and Tai and we illustrate this with the perfect cone compactification of $\Acal_g$. \\

\emph{Notational conventions.} If a group $\G$ acts on a set $X$, then for $A\subset X$, $\G_A$ resp.\  $Z_{\G}(A)$ denotes the group $\g\in\G$ that leave $A$ invariant resp.\ fix $A$ pointwise and $\G (A)$ will stand for the quotient $\G_A/Z_{\G}(A)$. 

As a rule an algebraic group (defined over a field contained in $\RR$, usually $\QQ$) is denoted by a script capital, its Lie group of real points  by the corresponding roman capital and the Lie algebra of the latter by the corresponding Fraktur lower case. 
\\

\begin{small}
\emph{Acknowledgements.} 
We thank Sam Grushevski, Klaus Hulek and Osola Tommasi for arousing our interest  in and for correspondence relating to some of the  issues discussed here. We are also grateful to Kirsten Wickelgren and Andr\'e Henriques for helpful remarks.
\end{small}

\section{Grothendieck-Leray coverings}

Recall that every small category $\Cfrak$ defines a simplicial set  $B\Cfrak$ and hence a semi-simplicial complex (its geometric realization)
$|B\Cfrak|$.  An \emph{$n$-simplex} of $B\Cfrak$ is represented by a chain $C_0\to C_1\to \cdots \to C_n$ of $n$ morphisms in $\Cfrak$, the  
\emph{$i$th degeneracy map} produces the $(n+1)$-simplex obtained by inserting the identity of $C_i$ at the obvious place and the $i$th face map is the $(n-1)$-simplex 
obtained by omitting $C_i$ (when $i=0,n$) or replacing $C_{i-1}\to C_i\to C_{i+1}$ by the composite $C_{i-1}\to C_{i+1}$ (when $0<i<n$). Its 
geometric realization $|B\Cfrak|$ is obtained as follows. Take for every $n$-simplex 
$C_0\to C_1\to \cdots \to C_n$ as above a copy of the standard $n$-simplex $\Delta^n$ and use the face maps to make the obvious identifications 
among these copies. The resulting space has almost the structure  of a simplicial complex with each edge labeled by a $\Cfrak$-morphism (it is just that 
a simplex is in general not determined by its vertex set). We subsequently use the degeneracy maps to make further identifications:  simplices having 
all their edges  labeled by  the identity of an object of $\Cfrak$  are contracted so that in the end there is no $1$-simplex with identity label left.

For example, if we regard  a discrete group $G$ as a category with just one object  and $G$ as its set of morphisms, then this construction  reproduces a model for the classifying space of $G$. That is why we call $|B\Cfrak|$ the \emph{classifying space} of $\Cfrak$. The \emph{homotopy type of $B\Cfrak$} will mean the homotopy type of $|B\Cfrak|$. Note that  for every object $C$ of $\Cfrak$ we have a copy of $B\!\aut(C)$ in $B\Cfrak$.  
A functor $F:\Cfrak\to \Cfrak'$ induces a map $BF:B\Cfrak\to B\Cfrak'$ and a natural transformation $F_0\Rightarrow F_1$  between two such functors determines a homotopy  between the associated maps $|BF_0|$ and $|BF_1|$. In particular, an equivalence of categories induces a homotopy equivalence. 
\\

Let  $Y$ be a locally contractible paracompact Hausdorff space. Assume $Y$ endowed with an indexed open covering $\Vfrak=(V_\alpha)_{\alpha\in A}$ that is locally 
finite and closed under finite  nonempty intersection: if  $V_\alpha, V_\beta\in\Vfrak$, then $V_\alpha\cap V_\beta=V_\gamma$ for some 
$\gamma\in A$, when nonempty. These indexed open subsets  define a category  $\Vfrak$ with object set $A$ for which we have a (unique) morphism 
$\alpha\to \beta$ when $V_\alpha\subset V_\beta$.  Any partition of unity subordinate to the maximal members of $\Vfrak$ can be used to define 
a continuous map $Y\to |B\Vfrak|$. As Weil showed, this is a homotopy equivalence  when  each $V_\alpha$ is contractible.
 
Suppose now  that  every $V_\alpha$ is a $K(\pi, 1)$ instead. More specifically, assume that for every $V_\alpha$  we are given a covering map 
$U_\alpha\to V_\alpha$  with $U_\alpha$  contractible. Then we have a category $\Ufrak$ 
with again $A$ as object set, but for which a morphism  is simply a continuous map $U_\alpha\to U_\beta $ which commutes with projections onto $Y$ 
(so that then $V_\alpha \subset V_\beta$). We have an obvious functor $\Ufrak\to \Vfrak$. Notice that  for any $\alpha\in A$, $\aut_{ \Ufrak}(\alpha)$ 
is the group of covering transformations of $U_\alpha\to V_\alpha$ and hence is  isomorphic to the fundamental group of $U_\alpha$.  
This means that $|B\!\aut_{\Ufrak}(\alpha)|$ is homotopy equivalent to $U_\alpha$.
The following theorem is mentioned  by Sullivan as Example 3 on page 125 of \cite{sullivan})  who refers in turn to  Theorem 2 on  p. 475 of 
Lubkin' s paper \cite{lubkin} (we thank Kirsten Wickelgren for pointing out these references).

\begin{theorem}[Lubkin, Sullivan]\label{thm:emcov}
In this situation the  continuous map  $Y\to |B \Vfrak|$ defined by a partition of unity  lifts to 
$Y\to |B(\Ufrak)|$ and this lift is a homotopy equivalence.
\end{theorem}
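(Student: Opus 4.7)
The plan is to interpolate $Y$ and $|B\Ufrak|$ by an auxiliary simplicial space $W_\bullet$ whose realization maps to both. Set
\[
W_n \;=\; \bigsqcup_{\alpha_0\to\alpha_1\to\cdots\to\alpha_n} U_{\alpha_0},
\]
the disjoint union ranging over all $n$-chains of morphisms in $\Ufrak$. Degeneracies insert identities; face maps $d_i$ with $i\geq 1$ compose consecutive arrows when $i<n$ or drop $\alpha_n$ when $i=n$, each acting trivially on the $U_{\alpha_0}$ factor; the crucial face map $d_0$ deletes $\alpha_0$ and uses the chosen morphism $U_{\alpha_0}\to U_{\alpha_1}$ to shift fibers. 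This yields two canonical augmentations
\[
\pi:|W_\bullet|\longrightarrow Y,\qquad q:|W_\bullet|\longrightarrow|B\Ufrak|,
\]
where $\pi$ sends each $U_{\alpha_0}$-factor to $Y$ by the covering projection $p_{\alpha_0}$, and $q$ collapses each $U_{\alpha_0}$-factor to a point so that $W_n$ becomes the discrete set $N_n\Ufrak$.

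The equivalence $q$ is the easy direction: each $U_{\alpha_0}$ is contractible, hence the levelwise map $W_n\to N_n\Ufrak$ is a weak equivalence to a discrete space. Standard goodness of $W_\bullet$ (its latching maps are closed inclusions, thanks to paracompactness of $Y$) ensures that geometric realization carries this levelwise equivalence into a homotopy equivalence $|W_\bullet|\simeq|B\Ufrak|$.

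For $\pi$ I would argue by analyzing fibers. For each $y\in Y$ introduce the small category $\Ufrak_y$ whose objects are pairs $(\alpha,u)$ with $p_\alpha(u)=y$ and whose morphisms $(\alpha,u)\to(\beta,v)$ are the morphisms $\alpha\to\beta$ in $\Ufrak$ sending $u\mapsto v$. Simple-connectedness of $U_\alpha$ makes such lifts unique, so $\Ufrak_y$ is a preorder. Closure of $\Vfrak$ under finite intersection makes $\Ufrak_y$ cofiltered: given two objects $(\alpha_1,u_1)$ and $(\alpha_2,u_2)$, take $V_\gamma=V_{\alpha_1}\cap V_{\alpha_2}\in\Vfrak$ and any $w\in p_\gamma^{-1}(y)$; unique lifting then produces morphisms $(\gamma,w)\to(\alpha_i,u_i)$ for $i=1,2$. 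Hence $|B\Ufrak_y|$ is contractible. Local finiteness of $\Vfrak$ together with paracompactness of $Y$ upgrades pointwise contractibility of fibers to a global weak equivalence: around each $y$ choose an open $V\subset Y$ meeting only the finitely many $V_\beta$'s with $y\in V_\beta$ and contained in each of them, so that $\pi^{-1}(V)$ may be identified with a simplicial thickening of $V\times|B\Ufrak_y|$ that deformation retracts onto $V$.

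Composing a homotopy inverse of $\pi$ with $q$ yields the desired equivalence $Y\simeq|B\Ufrak|$; both of the two maps $|W_\bullet|\to|B\Vfrak|$---one via $\pi$ followed by $Y\to|B\Vfrak|$, the other via $q$ followed by $B\Ufrak\to B\Vfrak$---agree up to homotopy with the partition-of-unity map, so the resulting homotopy equivalence genuinely lifts the latter. The main obstacle is the last step of the fiberwise argument: because $\pi$ is not literally a fibration and the strata $W_n$ carry non-discrete topology, promoting pointwise contractibility of fibers to a global weak equivalence requires the careful local analysis just sketched, exploiting precisely the local-finiteness and intersection-closure hypotheses on $\Vfrak$.
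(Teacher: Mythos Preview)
The paper does not actually prove this theorem: it is stated with attribution to Lubkin and Sullivan and a reference to their works, so there is no ``paper's own proof'' to compare against.

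That said, your approach is sound and is essentially the standard homotopy-colimit argument. The simplicial space $W_\bullet$ is the bar construction $B(\ast,\Ufrak,U)$ for the diagram $\alpha\mapsto U_\alpha$, and the map $q$ is a levelwise equivalence because each $U_\alpha$ is contractible; this is the easy half. Your analysis of $\pi$ is also correct: over a contractible open $V\subset Y$ that is contained in every $V_\alpha$ meeting it, each $p_{\alpha_0}^{-1}(V)$ trivializes as $V\times p_{\alpha_0}^{-1}(y)$, and tracing through the simplicial identities gives $\pi^{-1}(V)\cong V\times |B\Ufrak_y|$ on the nose, with the projection to $V$ being the first factor. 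Your verification that $\Ufrak_y$ is a cofiltered preorder (hence has contractible nerve) is exactly right, using unique path-lifting in the universal covers and closure of $\Vfrak$ under intersection.

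Two places deserve more care. First, the claim that ``paracompactness of $Y$'' yields goodness of $W_\bullet$ is not quite the right justification; what you want is that the degeneracies are summand inclusions of disjoint unions, which is automatic from the way $W_n$ is defined, or alternatively bypass the issue entirely by using the fat (semi-simplicial) realization. Second, to promote the local triviality $\pi^{-1}(V)\simeq V$ to a global equivalence you should invoke a specific result rather than leave it as a sketch: since you have shown that $\pi$ is locally a projection with contractible fiber over a cover of the paracompact $Y$, it is a shrinkable map (it admits a section and a fiberwise contraction, built from a partition of unity), hence a homotopy equivalence. Alternatively, cite that a map of this kind is a quasifibration with contractible fibers. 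Either way, the argument goes through.
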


For the applications that we have in mind  we need a generalization of this theorem of a `stacky' nature.  
To be precise, we assume  that  $U_\alpha$  is still contractible, but that  we are now given a group $\G_\alpha$ acting properly discontinuously  on $U_\alpha$ with a subgroup of finite index acting freely, such that $\pi_\alpha: U_\alpha\to V_\alpha$ is the formation of the $\G_\alpha$-orbit  space. Note that  $V_\alpha$ is then paracompact Hausdorff.

Let $V_\alpha\subset V_\beta$.  Let us agree that an \emph{admissible lift} of the inclusion $V_\alpha\subset V_\beta$  is a pair 
$(j: U_\alpha\to U_\beta,\phi: \G_\alpha\to \G_\beta)$ for  which 
\begin{enumerate}
\item [(AL$_1$)] $\phi: \G_\alpha\to \G_\beta$ is a group homomorphism,
\item [(AL$_2$)] $j$ lifts the inclusion  $V_\alpha\subset V_\beta$ and is equivariant relative to $\phi$, and
\item [(AL$_3$)] $\phi$ maps the $\G_\alpha$-stabilizer of every $x\in U_\alpha$ onto the
$\G_\beta$-stabilizer of $j(x)$.
\end{enumerate}
The group $\G_\beta$ also acts on the admissible lifts of $V_\alpha\subset V_\beta$ by having $\g\in\G_\beta$ send $(j, \phi)$ to $(\g j, \inn(\g)\phi)$,  where $\inn(\g)$ is the inner automorphism of  $\G_\beta$ defined by $\g$. We observe that if $\G_\beta$ acts freely on a  connected open-dense subset of the preimage of $V_\alpha$ in $U_\beta$, then this action is simply transitive.  

\begin{definition}\label{def:gl}
A \emph{Grothendieck-Leray atlas} $\Ucal$ over $Y$ consists of a collection of pairs $(\G_\alpha, \pi_\alpha: U_\alpha\to V_\alpha )_{\alpha\in A}$  as above and assigns to
every inclusion $V_\alpha\subset V_\beta$ a $\G_\beta$-orbit of admissible lifts  $(j,\phi)$, such that these are the morphisms of  a category  $\Ucal$: the identity of the pair $(U_\alpha, \G_\alpha)$ defines an admissible lift  and the composite of two admissible lifts is again admissible. 

A \emph{principal Grothendieck-Leray atlas} $\Ufrak$ over $Y$  is a Grothendieck-Leray atlas for which these lifts are indexed in a particular way: it 
consists  of giving for every inclusion $V_\alpha\subset V_\beta$ a collection of 
admissible lifts  indexed by a \emph{principal} $\G_\beta$-set $I_\alpha^\beta$: $\Phi_\alpha^\beta=(j_{i},\phi_{i})_{i\in I_\alpha^\beta} $ together with maps
$\Phi_\beta^\g\times \Phi_\alpha^\beta\to \Phi_\alpha^\g$ defined whenever $V_\alpha\subset V_\beta\subset V_\g$ such that
\begin{enumerate}
\item [(GL$_1$)] we have $I_\alpha^\alpha=\G_\alpha$ with $1\in \G_\alpha$ defining the pair $(1_{U_\alpha}, 1_{\G_\alpha})$, 
\item [(GL$_2$)] for $i\in I_\alpha^\beta$ and  $g\in\G_\beta$ we have $j_{g(i)}^\beta=gj_{i}$ and $\phi_{g(i)}^\beta=\inn (g)\phi_{i}$ and
\item [(GL$_3$)] the map $\Phi_\beta^\g\times \Phi_\alpha^\beta\to \Phi_\alpha^\g$ is $\G_\g$-equivariant and defines the composition of 
 admissible lifts.
\end{enumerate}
We often regard $\Ufrak$ as a small category with object set $A$ such that $\Phi_\alpha^\beta$ is the set of morphisms $\alpha\to \beta$. 
\end{definition}

\begin{remark}\label{rem:glprincipal}
A  Grothendieck-Leray atlas is automatically principal if each $\G_\alpha$ acts faithfully on $U_\alpha$, for then the collection of all the lifts $U_\alpha\to U_\beta$ of $V_\alpha\subset V_\beta$ are simply transitively permuted by $\G_\beta$ and hence form a principal $\G_\alpha$-set. 
\end{remark}

\begin{remark}\label{rem:stacky}
A  Grothendieck-Leray atlas gives rise to a Deligne-Mumford stack if its admissible lifts have the property that in (AL$_3$) 
$\phi$ maps the $\G_\alpha$-stabilizer of every $x\in U_\alpha$ \emph{isomorphically} onto the $\G_\beta$-stabilizer of $j(x)$. Although the structure that we get in general is weaker, there is a notion of a \emph{local chart}: given $y\in Y$, then the $V_\alpha$'s containing $y$ are finite in number and their intersection is one of them, say $V_{\alpha_o}$. We then stipulate that  for every $x\in \pi_{\alpha_o}{}^{-1}(y)$,  the pair $(U_{\alpha_o}\to Y, x) $ defines a local chart.
If $\alpha\in A$ is such that  $y\in V_\alpha$, then there exists by definition an admissible lift $(j,\phi)$ of the inclusion $V_{\alpha_o}\subset V_\alpha$ and $\phi$ maps  the $\G_{\alpha_o}$-stabilizer of $x$  onto the $\G_\alpha$-stabilizer of $j(x)$. If this is in fact an isomorphism,  
then we declare that the pair  $(U_\alpha\to X, j(x))$ is also a local chart. But the property of being a local chart need be not open: there exist examples for which the set of $x'\in U_{\alpha_o}$ for which $(U_{\alpha_o}\to Y, x')$  is a chart fails to be a neighborhood of $x$. All we can say a priori is that $(U_{\alpha_o}\to Y, x')$ is a local chart when $\pi_{\alpha_o}(x')$ lies in $V_{\alpha_o}\ssm \cup_{y\notin V_\beta} V_\beta$. This is a closed subset of $V_{\alpha_o}$ which contains $y$ and so this only shows that we have a locally finite partition of $Y$ into locally closed subsets along which charts `propagate'. This  phenomenon we encounter for a Baily-Borel compactifications, where the partition is that into Baily-Borel strata.
\end{remark}

We associate to a Grothendieck-Leray atlas as above a homotopy type that we will refer to as its \emph{stacky homotopy type}.
Let us begin with recalling Segal's categorical construction  of the universal bundle of a discrete group $\G$ \cite{segal}.
Let  $\hat\G$ be the groupoid  whose object set is $\G$  and has for any two objects $\g, \g'\in\G$ just one morphism $\g\to \g'$. Since this 
category is equivalent to the  subcategory represented by the single element $1\in\G$, $|B\hat\G|$ is contractible.  This category is acted on 
by the group $\G$ with  quotient category the group $\G$, but now viewed as a category with a single object: the quotient forming  functor $\hat\G\to \G$ sends the unique morphism  $\g\to \g'$ to $\g^{-1}\g'$. The associated  map $|B\hat\G|\to |B\G|$ is a universal $\G$-bundle. 
This construction is clearly functorial on the category of discrete groups.

We apply this in the present situation as follows. For $\alpha\in A$, $\hat U_\alpha:= U_\alpha\times |B\hat\G_\alpha|$ is contractible and the diagonal action of $\G_\alpha$ on it is free and proper.  So if we denote by $\hat U_\alpha\to \hat V_\alpha$ the formation of the corresponding orbit space, then this is also a universal $\G_\alpha$-bundle.  Given an inclusion 
$V_\alpha\subset V_\beta$, then an admissible lift  $(j:U_\alpha\to U_\beta, \phi:\G_\alpha\to \G_\beta)$ defines a map $\hat j:=j\times |B\hat\phi|: \hat U_\alpha\to \hat U_\beta$
that is equivariant with respect to $\phi$. Such lifts make up a single $\G_\beta$-orbit and hence we have a  map between two universal coverings:
they induce the same map $\hat V_\alpha\to \hat V_\beta$ and they yield all the lifts $\hat U_\alpha\to \hat U_\beta$ of the latter. Our assumptions imply that $\alpha\mapsto \hat V_\alpha$ defines a functor from $\Vfrak$ to the category of topological spaces so that we can form
$\hat Y:=\varinjlim_{\Vfrak} \{\hat V_\alpha\}_\alpha$.  The collection of  the 
maps $\hat U_\alpha\to \hat V_\alpha$ plus the lifts $\hat j$ as above form a category $\hat\Ufrak$ of contractible spaces over $\hat Y$. The Lubkin-Sullivan theorem does not quite apply as such to this system of coverings, because the maps
$\hat V_\alpha\to \hat V_\beta$ need not be injective (they are open, though). But it will, if we replace $\hat Y$ by the homotopy colimit $\hat Y^h:=\hocolim_{\Vfrak}  \hat V_\alpha$ of this system (here we use the construction that regards the system as a simplicial space). It has the property that the natural map $\hat Y^h\to \hat Y$ is a homotopy equivalence. We thus find a homotopy equivalence between $\hat Y$ and $|B\hat\Ufrak|$. 

Consider the obvious projection $p_\alpha: \hat V_\alpha\to V_\alpha$. The fiber over $y\in V_\alpha$ is the quotient of the contractible $\G_\alpha$-space  $|B\hat\G_\alpha|$ by the $\G_\alpha$-stabilizer of  some  $x\in U_\alpha$ over $y$. So it  has the rational cohomology of the  finite group $(\G_\alpha)_x$, which is that of a point. This fiber is also a deformation retract of the preimage of a neighborhood of $y$ in $V_\alpha$. Hence the Leray spectral sequence 
for rational cohomology of the projection $\hat Y\to Y$ degenerates so that this projection induces an isomorphism on rational cohomology. 

In case we have a principal Grothendieck-Leray atlas $\Ufrak$, then we can identify $\G_\alpha$ with the $\Ufrak$-endomorphisms of $\alpha$ so that 
$B\G_\alpha\subset B\Ufrak$.
The projection $\hat V_\alpha\to |B\G_\alpha|$ is a bundle with fiber the contractible $U_\alpha$. Since this is functorial, these projections assemble  to a map $\hat Y^h\to |B\Ufrak|$. Its fibers are contractible and so this is a homotopy equivalence. 

We record this discussion in the form of a scholium.

\begin{scholium}\label{thm:vemcov}
With a Grothendieck-Leray atlas as above we have associated a natural homotopy  class of maps  from its stacky homotopy type to $Y$ and this class induces an isomorphism on rational cohomology. For a principal Grothendieck-Leray atlas $\Ufrak$  this stacky homotopy type is represented by $|B\Ufrak|$.
\end{scholium}

\begin{remark}\label{rem:refininggl}
In our applications we encounter refinements of Grothendieck-Leray atlases  of very simple type, namely obtained  by giving for each $\alpha\in A$ an open $V_\alpha'\subset V_\alpha$ such that this inclusion is a homotopy equivalence and $\{V'_\alpha\}_\alpha$ still covers $Y$. This extends in a natural manner to a Grothendieck-Leray atlas with the same index set and if one is principal, then so is the other. It is clear that this induces a homotopy equivalence between the associated homotopy types. From a conceptual point of view it  would be more satisfying to introduce a considerable more general notion of refinement for Grothendieck-Leray atlases: such a refinement  should then be given  by a functor $F:\Ucal\to \Ucal'$ that gives rise to a (weak) homotopy equivalence of their stacky homotopy types so that the resulting structure on $Y$ (which we might regard as a weak form of a Deligne-Mumford stack) has this (weak)  homotopy type as one its attributes. We refrained from developing these notions,  as there is for this no need in the present paper. 
\end{remark}

Our applications of this theorem have in common a number of features that are worth isolating. 
Let $X$ be a space endowed with a \emph{stratification} $\Scal$, that is, a partition  into subspaces (called strata) such that the closure  of each stratum is a union of strata. We then have a partial order on $\Scal$ for which $S'\le S$ means that $S'\subset\overline{S}$.
We assume that the length of chains $S_\pt=(S_0>S_1>\cdots >S_n)$ in $\Scal$ is bounded, but we do not ask that $X$ be locally compact, nor that
$\Scal$ be locally finite.

\begin{theorem}\label{thm:emcov2}
Let $\G$ be a discrete group which acts on the stratified space $(X,\Scal)$  and suppose that for every stratum $S$ we are given a (what we will call 
\emph{link-}) subgroup
$\G^\ell_S\subset Z_\G (S)$ such  that for all $\g\in\G$, $\G^\ell_{\g S}=\g \G^\ell_S\g^{-1}$ (so that $\G^\ell_S$ is normal in 
$\G_S$) and is such that  $\G^\ell_S\supset \G^\ell_{S'}$ when  $S\le S'$.  

If we can find for every $S\in\Scal$ an open neighborhood $U_S$ of $S$ in $X$ such that 
\begin{enumerate}
\item[(i)] $U_S\cap U_{S'}$ is empty unless $S'\ge S$ or $S'\le S$,
\item[(ii)] $\g (U_S)=U_{\g S}$ for every $\g\in\G$,
\item[(iii)]  for every stratum $S$, $\G^\ell_S\bs U_S$ is a paracompact Hausdorff space on which $\G_S/\G^\ell_S$ acts properly with a cofinite subgroup acting freely,
\item[(iv)] for every chain $S_\pt=(S_0>S_1>\cdots >S_n)$ of strata,   
$\G^\ell_{S_0}\bs (U_{S_0}\cap \cdots\cap U_{S_n})$ is contractible,
\end{enumerate}
then the orbit space $\G\bs X$ is a paracompact Hausdorff space which comes with a  natural structure of a stacky homotopy type (so independent of
the choice of open subsets $U_S$ as above) that is represented by the category $\Sfrak$  with  object set $\Scal$  and for which a morphism $S \to S'$ is a right coset 
$[\gamma]\in \G^\ell_{S'}\bs\G$ with the property that  $\gamma S \ge S'$ (so that we have natural homotopy class of maps $|B\Sfrak|\to \G\bs X$ which induces an isomorphism on rational cohomology). This is functorial with respect to  inclusions $X'\subset X$ of open $\G$-invariant 
unions of strata.

If in this situation $\G$ acts faithfully and the action in (iii) is free (so that necessarily $\G^\ell_S= Z_\G (S)$ for  every $S\in\Scal$), then in the preceding `stacky homotopy' can be replaced by `homotopy'.
\end{theorem}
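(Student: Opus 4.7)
The plan is to build a principal Grothendieck-Leray atlas $\Ufrak$ on $Y := \G\bs X$ using the data $(U_S,\G^\ell_S)$, apply Scholium \ref{thm:vemcov}, and identify $|B\Ufrak|$ with $|B\Sfrak|$ via Quillen's Theorem A.

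\emph{Slice property and cover.} Because chains in $\Scal$ are bounded in length, $\g S$ comparable to $S$ forces $\g S = S$ (iterating $\g$ on a strict inequality would produce an unbounded chain). Combined with (i), this yields for every chain $S_\pt = (S_0>\cdots>S_n)$ with $U_{S_\pt}:=\bigcap_i U_{S_i}$ the identity
\[
\{\g\in\G : \g U_{S_\pt}\cap U_{S_\pt}\ne\emptyset\} = \G^{\rm stab}_{S_\pt} := \bigcap_i\G_{S_i}.
\]
The chain monotonicity $\G^\ell_{S_0}\subset\G^\ell_{S_i}\subset\G_{S_i}$ makes $\G^\ell_{S_0}$ a normal subgroup of $\G^{\rm stab}_{S_\pt}$. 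Setting $\tilde V_{S_\pt}:=\G^\ell_{S_0}\bs U_{S_\pt}$ (contractible by (iv)) and $\bar\G_{S_\pt}:=\G^{\rm stab}_{S_\pt}/\G^\ell_{S_0}$, the slice property makes $V_{S_\pt}:=\bar\G_{S_\pt}\bs\tilde V_{S_\pt}$ embed as an open subset of $Y$, paracompact Hausdorff by (iii). The $V_{S_\pt}$ cover $Y$, so $Y$ itself is paracompact Hausdorff.

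\emph{The GL atlas.} The chain analysis of (i) shows that $V_{S_\pt}\cap V_{T_\pt}$ decomposes as a union of $V_{R_\pt}$'s for chains $R_\pt$ extending both $S_\pt$ and $\g T_\pt$ for suitable $\g\in\G$. Indexing by $\G$-orbits of chains and applying the refinement of Remark \ref{rem:refininggl}, this gives a principal Grothendieck-Leray atlas $\Ufrak$: the admissible lifts for an inclusion $V_{R_\pt}\subset V_{S_\pt}$ (coming from a refinement $S_\pt\subset\g R_\pt$) form a principal $\bar\G_{S_\pt}$-set indexed by elements $\g\in\G$. Scholium \ref{thm:vemcov} then supplies a natural map $|B\Ufrak|\to Y$ representing the stacky homotopy type and inducing a rational cohomology isomorphism.

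\emph{Identification with $\Sfrak$ and the faithful case.} Define $F:\Ufrak\to\Sfrak$ on objects by $S_\pt\mapsto S_0$ and on a morphism $R_\pt\to S_\pt$ (labelled by $\g\in\G$) by the coset $[\g]\in\G^\ell_{S_0}\bs\G$; the refinement condition $S_\pt\subset\g R_\pt$ gives $\g R_0\ge S_0$, making this a well-defined functor compatible with composition. To check that $|BF|$ is a homotopy equivalence I invoke Quillen's Theorem A: for each $S\in\Scal$, the over-category $F/S$ is contractible because any pair $(T_\pt,[\g])$ can be canonically connected to $((S),[1])$ via the chain refinement $T_\pt\to T_\pt\cup(\g^{-1}S)$ followed by the length-$0$ projection. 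Functoriality with respect to open $\G$-invariant $X'\subset X$ is immediate from the construction. When $\G$ acts faithfully and the action of (iii) is free, $\G^\ell_S = Z_\G(S)$ and the admissible lifts become canonical, placing us in the setting of Theorem \ref{thm:emcov} (Lubkin-Sullivan), whose conclusion promotes the rational equivalence to a genuine homotopy equivalence. The principal technical obstacle is the Quillen A verification: the $\G^\ell_{S_0}$-coset structure on $\Sfrak$-morphisms must be carefully matched against the principal $\bar\G_{S_\pt}$-set structure of admissible lifts in $\Ufrak$, with the contractibility of the over-categories ultimately resting on (iv) applied to chains refining $S$.
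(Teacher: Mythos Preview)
Your approach is essentially the paper's: build a principal Grothendieck--Leray atlas on $\G\bs X$ indexed by chains of strata, invoke Scholium~\ref{thm:vemcov}, and then collapse the chain category to $\Sfrak$ via Quillen's Theorem~A applied to the ``take the maximal stratum'' functor $F:S_\pt\mapsto S_0$. The slice argument using boundedness of chains is a nice touch that the paper leaves implicit.

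The one place where your sketch diverges and becomes shaky is the Quillen~A verification. The paper dispatches this in one line: $((S),[1])$ is a \emph{final} object of $F/S$, because for any $(S_\pt,[\g])$ the class $[\g]$ itself furnishes the unique $F/S$-morphism $(S_\pt,[\g])\to((S),[1])$. Your two-step route through the refined chain $T_\pt\cup(\g^{-1}S)$ is more complicated than needed and has a direction problem: in $\Sfrak_\pt$ a refinement $T_\pt\subset R_\pt$ gives a morphism $R_\pt\to T_\pt$, not the reverse, so ``$T_\pt\to T_\pt\cup(\g^{-1}S)$'' is not a $\Sfrak_\pt$-morphism. More importantly, your final remark that the contractibility of $F/S$ ``ultimately rests on (iv)'' is a misattribution: hypothesis (iv) is what makes the charts $\overline U_{S_\pt}$ contractible (so that the Scholium applies), whereas the contractibility of the comma categories is a purely combinatorial fact about how cosets compose, independent of any topology. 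Separating these two roles will make the argument cleaner.
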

\begin{proof}
We note that (i) implies that any finite nonempty intersection of such $U_S$ is of the form
$U_{S_\pt}=U_{S_0}\cap \cdots \cap U_{S_n}$ for a unique chain $S_\pt=(S_0>S_1>\cdots >S_n)$ in $\Scal$. From (i) and (ii) we get that every $\G$-orbit meets $U_S$ in a $\G_S$-orbit or is empty. Hence $\G_S\bs U_S$ maps homeomorphically onto an open subset  $V_S$ of $\G\bs X$.
Any nonempty intersection of such open subsets of $\G\bs X$ is the image $V_{S_\pt}$ of $U_{S_\pt}:=U_{S_0}\cap \cdots\cap U_{S_n}$ for some chain $S_\pt$ and hence homeomorphic to $\G_{S_\pt}\bs U_{S_\pt}$. If we put $\overline{U}_{S_\pt}:=\G^\ell_{S_0}\bs U_{S_\pt}$, then $\overline{U}_{S_\pt}$ is an open subset of $\overline{U}_{S_0}=\G^\ell_{S_0}\bs U_{S_0}$. By (iii) and (iv)  this is a contractible paracompact Hausdorff  space
on which $\overline\G_{S_\pt}:=\G_{S_\pt}/\G^\ell_{S_0}$ acts properly.  

We claim that the collection of pairs $(\overline{U}_{S_\pt}, \overline\G_{S_\pt})$ extends in a natural manner to  a principal Grothendieck-Leray atlas: 
let $S_\pt$ and $S'_\pt$ be finite chains in $\Scal$ such that the image of $\overline{U}_{S_\pt}$  in $\G\bs X$ is contained in the image of $\overline{U}_{S'_\pt}$.
This is equivalent to  the existence of a $\g\in\G$ such that $S'_\pt$ is a subchain of $\g S_\pt$ and the elements of  $\G$ with this property then make up the right coset $\G_{S'_\pt}\g$. The smaller coset  $\G^\ell_{S'_0}\g$ defines an admissible lift: since $\g\G^\ell_{S_0}=\G^\ell_{\g S_0}\g\subset \G^\ell_{S'_0}\g$, this indeed induces a continuous map $j: \overline{U}_{S_\pt} \to \overline{U}_{S'_\pt}$ over $\G\bs X$
and since $\g\G_{\Sfrak_\pt}\g^{-1}=\G_{\g\Sfrak_\pt}\subset  \G_{\Sfrak'_\pt}$,  conjugation by $\g$ defines a homomorphism $\phi:=\overline\G_{\Sfrak_\pt}\to \overline \G_{\Sfrak'_\pt}$ such that $j$ is $\phi$-equivariant.  So we have a  collection of admissible lifts
indexed by the $\G^\ell_{\g S'_0}$-cosets contained in $\G_{S'_\pt}\g$. This is clearly a principal set for the group $\overline\G_{S'_\pt}= \G_{S'_\pt}/ \G^\ell_{\g S'_0}$. The other three properties  of Definition \ref{def:gl} are now easily checked.

So the associated category $\Sfrak_\pt$ has as its objects the finite chains in $\Scal$ and a morphism 
$S_\pt\to S'_\pt$ is given by right coset $[\g]\in \G^\ell_{S'_0}\bs\G$ such $S'_\pt$ is a subchain of $\g S_\pt$. Strictly speaking we do not have principal Grothendieck-Leray atlas yet, because of an `overcount' in our indexing: the image $V_{S_\pt}$ of $\overline{U}_{S_\pt}$ in $\G\bs Y$ is of course also the image of $\g\overline{U}_{S_\pt}$ and in this way we get $\#(\G/\G_{S_\pt})$ copies of $\overline{U}_{S_\pt}$ having the same image. So in this rather trivial sense the cover 
$\{V_{S_\pt}\}$ can fail to be locally finite. But we can of course select for each $\G$-orbit of  $\Sfrak_\pt$-objects a representative and then take the full subcategory $\Sfrak^\circ_\pt\subset \Sfrak_\pt$ with this collection of objects. We then get a principal Grothendieck-Leray atlas and since 
$\Sfrak^\circ_\pt\subset \Sfrak_\pt$ is an equivalence of categories, the stacky homotopy type of $\G\bs Y$ is that of $|B\Sfrak_\pt|$.

We have a functor $F: \Sfrak_\pt\to \Sfrak$ defined by  $S_\pt=(S_0>S_1>\cdots >S_n)\mapsto S_0$. Indeed, a morphism 
$[\g]: S_\pt \to S'_\pt$ as above has the property that $S'_0=\g S_i$ for some $i$ and so $F(S'_\pt)=S'_0=\g S_i\le \g S_0=\g F(S_\pt)$. Since  
$\g\G^\ell_{S_0}\subset \g\G^\ell_{S_i}=\G^\ell_{S'_0}\g$, $\g $ determines an element $[\g]$  of $\G^\ell_{S'_0}\bs \G$ and this yields our $ \Sfrak$-morphism $F[\g]: S_0\to S'_0$.

According to Thm.\ A of \cite{quillen},  $|BF|$ is a homotopy equivalence if we show that for every object $S\in\Scal$ of $\Sfrak$, the category  $F/S$ is contractible. 
Let us recall that an an object of $F/S$ is given by pair $(S_\pt, [\g])$, where $S_\pt=(S_0>S_1>\cdots >S_n)$ is an object of $\Scal_\pt$ and 
$[\g] \in \G^\ell_{S_0}\bs\G$ is such that $\g S_0\ge S$. An $F/S$-morphism $(S_\pt, [\g])\to (S'_\pt, [\g'])$ is a $\Sfrak_\pt$-morphism
$[\delta]: S_\pt\to S'_\pt$ (with  $[\delta]\in \G^\ell_{S'_0}\bs\G)$,  so that $S'_\pt$ is a subchain of $\g S_\pt$ with the property that $\g'\delta$ and $\g$ define the same element of    $\G^\ell_{S}\bs \G$. This category  has  as  a  final object, namely  $(S, [1])$:  for an  object  $(S_\pt, [\g])$ of $F/S$, 
$[\g]$ defines an $F/S$-morphism $(S_\pt, [\g])\to (S,  [1])$. This implies that $F/S$ is contractible.

The last assertion  is obtained by applying Theorem \ref{thm:emcov} instead of \ref{thm:vemcov}.
\end{proof}

In many applications, we will take $\G^\ell_S=Z_\G (S)$, but this need not be so in the situation that is our main interest, the Baily-Borel compactification.
It is also with this case in mind that we  included a stacky version.

Here is perhaps the simplest nontrivial illustration of Theorem \ref{thm:emcov2}. 

\begin{example}[The infinite ramified cover of the unit disk]\label{example:easy}
We take for $X$ be the space that contains the upper half plane $\HH$ as an open subset and for which the complement 
$X-\HH$ is a singleton $\{\infty\}$.  A neighborhood basis of $\infty$  meets $\HH$ in the upwardly  shifted copies of $\HH$.
We take this partition as our stratification $\Scal$ and we take $\G=\ZZ$, with $\G$ acting by translations on $\HH$ (and of course trivially on $\infty$)
and $\G^\ell_{\{\infty\}}=Z_\G(\{\infty\})=\ZZ$ and  $\G^\ell_{\HH}=Z_\G(\HH)=\{ 0\}$.  We choose $U_{\{\infty\}}=X$ and $U_\HH=\HH$. 
The category $\Sfrak$ that we get from Theorem \ref{thm:emcov2} has the two objects $\{\infty\}, \HH$ with $\{\infty\}$ being a final object.
The only $\Sfrak$-morphisms apart from the unique morphism $\HH\to \{\infty\}$ are the elements of the (translation) group $\ZZ$ viewed  as automorphisms of $\HH$. So $|B\Sfrak|$ can be identified with the cone over the classifying space $|B\ZZ|$.

The map $z\mapsto \exp(2\pi\sqrt{-1}z)$ identifies the pair $\ZZ\bs (X, \HH)$ with the pair 
$(\Delta, \Delta^*)$ consisting of the complex unit disk and  the same deprived from $0$. So if we consider $\Delta^*$ as the primary datum, then we are just filling in the puncture and in the above picture $\Delta^*\subset \Delta$ corresponds to the inclusion of $|B\ZZ|$ in the cone over $|B\ZZ|$.
\end{example}

This example generalizes in a  simple manner to the product $(\Delta^n, (\Delta^*)^n)$ (that we obtain as an orbit  space of
$(\HH\cup\{ \infty\})^n$ under the action of $\ZZ^n$). Closely related to this is the example below of a torus embedding. It appears implicitly in some of our applications. 

\begin{example}\label{example:torus}
Let $\G$ be a free abelian group of finite rank. Then $T=\CC^\times \otimes \G$ is an algebraic torus with underlying affine variety $\spec(\CC[\G^\vee])$, where $\G^\vee=\Hom (\G, \ZZ)$. Let also be given a closed strictly convex cone $\sigma\subset \RR\otimes\G$ spanned by a finite subset of $\G$. Recall that this defines a normal affine torus embedding $T\subset T^\sigma$ as follows.
Denoting by $\check{\sigma}\subset \Hom (\G, \RR)$ the cone of linear forms that are $\ge 0$ on $\sigma$, then $T^\sigma:=\spec \CC[\G^\vee\cap \check{\sigma}]$ and the inclusion $\CC[\G^\vee]\supset \CC[\G^\vee\cap \check{\sigma}]$ defines 
the embedding $T\subset T^\sigma$. We also recall that $T^\sigma$ is stratified into algebraic tori that are quotients of $T$ and indexed by the faces of $\sigma$: 
for every face $\tau$ of $\sigma$ denote by $\G_\tau$ the intersection of $\G$ with the vector subspace of $\RR\otimes \G$ spanned by $\tau$ and put $T_\tau:= \CC^\times \otimes \G_\tau$. Then $T(\tau):=T/T_\tau$ is a stratum.

But in this context it is better  to think of $T$ (via the exponential map) as the orbit space of its Lie algebra $\mathfrak{t}=\CC\otimes\G$ by $\G$, letting each $\g\in \G$ act as  translation over $2\pi\sqrt{-1}\g$. There is then a corresponding picture for $T^\sigma$: if we write $\mathfrak{t}_\tau$ for the $\CC$-span of $\tau$, then $T^\sigma$ is the orbit space with respect to the obvious $\G$-action on the disjoint union of the complex vector spaces $\mathfrak{t}^\sigma:=\sqcup_{\tau\le\sigma} \mathfrak{t}/\mathfrak{t}_\tau$ (endowed with a topology which is defined in the spirit of Example \ref{example:easy}). We define a neighborhood $U_\tau$ of $\mathfrak{t}/\mathfrak{t}_\tau$  in $\mathfrak{t}^\sigma$ as follows: let $\Phi\subset \check{\sigma}\cap \G^\vee$ be the set of integral generators of the one-dimensional faces of $\check{\sigma}\cap\G^\vee$. Then we define $U_\tau$ as the subset of
$\sqcup_{\rho\le \tau} (\mathfrak{t}/\mathfrak{t}_\rho)$ defined by the property that its intersection with $\mathfrak{t}/\mathfrak{t}_\rho$ is defined by  $\re (\phi)>\re(\phi')$ for all $(\phi, \phi')\in \Phi\times \Phi$ with $\phi|\tau>0$ and $\phi'|\tau=0$ (note that that both $\phi$ and $\phi'$ define linear forms on $\mathfrak{t}/\mathfrak{t}_\rho$). Then we have $\G_{U_\tau}=\G$ and  $Z_\G (\tau)=\G\cap \mathfrak{t}_\tau$. Since $(\G\cap \mathfrak{t}_\tau)\bs U_\tau$ fibers over $\mathfrak{t}/\mathfrak{t}_\tau$ with fibers conical open subsets of complex vector space it is contractible.
The associated category $\Sfrak$ has its objects indexed by faces $\tau$ of $\sigma$, and a morphism $\tau\to \tau'$ only exists when $\tau\subset \tau'$ and is then given by an element of 
$\G (\tau'):=\G/\G\cap \mathfrak{t'}_\tau$. This category has a final object represented by $\tau=\sigma$ and so $|B\Sfrak|$ is contractible. We may  also  obtain $|B\Sfrak|$ as the geometric realization of the diagram of spaces  $B \G (\tau)$ connected by the  maps 
$B\G (\tau) \to B\G (\tau')$  ($\tau\subset \tau'$).
\end{example}

\section{The homotopy type of a Deligne-Mumford compactification}
Ebert and Giansiracusa  determined in \cite{ebertgian}  the homotopy type of the Deligne-Mumford moduli space of stable $n$-punctured genus $g$ curves. We outline how this fits our setting.  This is one which involves the rational homology type only, but in the present case our arguments work without change if we wish to do this for the homotopy type of that moduli space as an orbifold. 

We fix a \emph{$n$-punctured surface $S$ of genus $g$,}  which means that $S$ is a connected  oriented differentiable surface that can be obtained as the complement  of $n$ distinct points of a compact surface of genus $g$. We assume that $S$ is \emph{hyperbolic} in the sense that its Euler characteristic $2-2g-n$ is negative. This is indeed equivalent to $S$ admitting a complete metric of constant curvature $-1$ and of finite volume (and such a metric is equivalent to putting on $S$ a complex structure compatible with the given orientation so that it becomes a nonsingular complex-algebraic curve which is universally covered by the upper half plane). 
Denote by $\hyp(S)$ the space of all such metrics  on $S$. This space is acted on by the group $\diff (S)$ of diffeomorphisms of $S$. The identity component $\diff^0(S)$ of $\diff(S)$ acts freely and its orbit space,  the \emph{Teichm\"uller domain} $\Tscr (S)$ of $S$, is contractible and has naturally the  structure of a complex manifold of complex dimension $3g-3+n$.  Letting $\diff^+(S)\subset \diff (S)$ stand for the group of orientation preserving diffeomorphisms of $S$ (which may permute the punctures), then the \emph{mapping class group} $\G (S):=\diff^+(S)/\diff^0(S)$ acts on $\Tscr (S)$ by complex-analytic transformations and this action is proper. 
The moduli stack of smooth $n$-punctured curves of genus $g$, $\Mcal_{g,[n]}$, is as an orbifold the  $\G (S)$-orbit space of $\Tscr(S)$.  

A compact 1-dimensional submanifold $A\subset S$ is necessarily a disjoint union of a finitely many embedded circles. 
Say that $A$ is \emph{admissible} if every connected component of $S\ssm A$ is of hyperbolic type (so this includes the case $A=\emptyset$). 
We define the \emph{augmented curve complex} of $S$ as the partially ordered set  $\Ccal^*(S)$ of which an element is an isotopy class $\sigma$  of admissible compact 1-dimensional submanifolds $A\subset S$ as above, the partial order being given by inclusion. Note that $\Ccal^*(S)$ has the empty set as its minimal element (whence `augmented'). For a simplex $\sigma\in \Ccal^*(S)$, we  denote by $\G(S)_{\sigma}\subset \G(S)$ the subgroup that stabilizes this isotopy class  in the strict sense that the isotopy class of each connected component of representative $A$ of $\sigma$  is preserved without reversal of  orientation. This implies that an element of $\G(S)_{\sigma}$ induces a mapping class for each connected component of $S\ssm A$. The Teichm\"uller space $\Tscr (S\ssm A)$ and the  product of the mapping class groups of the connected components of $S\ssm A$ only depend (up to unique  isomorphism) on $\sigma$ and so we take the liberty to write $\Tscr (S\ssm \sigma)$ resp.\ $\G (S\ssm {\sigma})$ instead. The natural homomorphism  $\G(S)_\sigma\to \G (S\ssm \sigma)$ has image a cofinite subgroup of $ \G (S\ssm \sigma)$ and kernel a copy of $\ZZ^{v(\sigma)}$ in $\G(S)_\sigma$, where $v(\sigma)$ is the vertex set of $\sigma$ (a vertex corresponds to the image in  $\G (S)_{\sigma}$ of a Dehn twist along the corresponding component of $A$; beware that $v(\sigma)$ can be empty  in which case $\ZZ^{v(\sigma)}=\{ 0\}$). Note that the image of $\ZZ^{v(\sigma)}$ is a central subgroup of $\G(S)_{\sigma}$. This will be our $\G(S)^\ell_{\sigma}$.

Consider the disjoint union $\overline{\mathscr T} (S)$  of the  Teichm\"uller spaces ${\mathscr T} (S\ssm \sigma)$, where $\sigma$ runs over all the admissible isotopy classes. The group $\Gamma(S)$ acts in this union and there is a natural $\Gamma(S)$-invariant topology on $\overline{\mathscr T} (S)$ which has the property that the closure of ${\mathscr T} (S\ssm \sigma)$ meets  ${\mathscr T} (S\ssm \sigma')$ if and only if $\sigma$ is a face of $\sigma'$. 

The moduli space of stable punctured curves of genus $g$ and with $n$ (unnumbered) punctures, $\Mcalbar _{g,[n]}$ can be regarded as  the $\G (S)$-orbit space of $\overline\Tscr(S)$.  In  fact, $\Mcalbar _{g,[n]}$ is a Deligne-Mumford stack in the complex-analytic category and the stratification of  $\Mcalbar _{g,[n]}$  inherited by that of $\overline\Tscr(S)$  is that of a normal crossing divisor.
It can be shown that every stratum $\Tscr (S\ssm\sigma)$ of $\overline\Tscr (S)$ admits a regular neighborhood $U_\sigma$ in $\overline\Tscr (S)$  whose
$\G(S)$-stabilizer is $\G(S)_\sigma$ and is such that the resulting covering $\{U_\sigma\}_{\sigma\in\Ccal^*(S)}$ of $\overline\Tscr(S)$ satisfies the hypotheses of Theorem \ref{thm:emcov2}.  The theorem in question gives us the following  reformulation of  the theorem of Ebert and Giansiracusa 
\cite{ebertgian} (which for $n=0$ is due to Charney and Lee \cite{charneylee:mg}): 

\begin{theorem}\label{thm:mghomotopy}
The homotopy type of the Deligne-Mumford stack $\Mcalbar_{g,[n]}$ is naturally realized by the classifying space of the category $\Cfrak^*(S)$ whose objects are the elements of the augmented curve complex $\Ccal^*(S)$ and for which a morphism $\sigma\to \sigma'$ is given by a $[\g]\in \ZZ^{\sigma'}\bs \G(S)$ with the property that $[\g]\sigma\subset \sigma'$.
\end{theorem}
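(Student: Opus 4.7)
The plan is to apply Theorem \ref{thm:emcov2} to the action of $\G(S)$ on $\overline{\Tscr}(S)$, with stratification $\Scal=\{\Tscr(S\ssm\sigma)\}_{\sigma\in\Ccal^*(S)}$ and link subgroup assignment $\G(S)^\ell_{\Tscr(S\ssm\sigma)}:=\ZZ^{v(\sigma)}$, the central Dehn-twist subgroup of $\G(S)_\sigma$. The algebraic prerequisites are immediate: centrality gives the required normality in $\G(S)_\sigma$; the equivariance $\g\ZZ^{v(\sigma)}\g^{-1}=\ZZ^{v(\g\sigma)}$ reflects how Dehn twists transform under mapping classes; and for $\sigma\subset\sigma'$ in $\Ccal^*(S)$ we have $v(\sigma)\subset v(\sigma')$ together with the closure relation $\Tscr(S\ssm\sigma')\le\Tscr(S\ssm\sigma)$ recalled in the excerpt, so $\ZZ^{v(\sigma')}\supset\ZZ^{v(\sigma)}$ matches the link-subgroup inclusion required by the theorem. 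Chains in $\Scal$ have length bounded by $3g-3+n$, the cardinality of a pants decomposition of $S$.

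The geometric input is the existence of $\G(S)$-equivariant neighborhoods $U_\sigma$ of $\Tscr(S\ssm\sigma)$ satisfying conditions (i)--(iv), asserted in the paragraph preceding the theorem as standard in the analytic theory of $\overline{\Tscr}(S)$. The guiding model is the Earle--Marden plumbing description: near a point of $\Tscr(S\ssm\sigma)$, $\overline{\Tscr}(S)$ is locally $\Tscr(S\ssm\sigma)\times(\HH\cup\{\infty\})^{v(\sigma)}$, each $\HH\cup\{\infty\}$-factor parametrizing a plumbing modulus $\tau_e$ at the node $e$ (with $\infty$ meaning pinched), on which the Dehn twist at $e$ acts by the translation $\tau_e\mapsto\tau_e+1$ of Example \ref{example:easy}. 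One takes $U_\sigma$ as the $\G(S)$-saturation of a cuspidal region defined locally by inequalities $\operatorname{Im}\tau_e>M_e$ for $e\in v(\sigma)$, with thresholds $M_e$ tuned so that regions for incomparable $\sigma$'s stay disjoint (condition (i)). Condition (ii) is built in; condition (iii) follows because $\G(S)^\ell_\sigma\bs U_\sigma$ is locally a polydisc bundle over $\Tscr(S\ssm\sigma)$ on which the quotient $\G(S)_\sigma/\G(S)^\ell_\sigma$, a cofinite subgroup of the mapping class group of the pieces of $S\ssm\sigma$, acts properly with a finite-index torsion-free subgroup acting freely.

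The substantive step is (iv). Given a chain $\sigma_0\subsetneq\cdots\subsetneq\sigma_n$, the intersection $U_{\sigma_0}\cap\cdots\cap U_{\sigma_n}$ is, in the plumbing model over a point of the deepest stratum $\Tscr(S\ssm\sigma_n)$, a product of convex cuspidal subsets of $\HH\cup\{\infty\}$ indexed by $e\in v(\sigma_n)$ (the cuspidal restriction on the $e$-th factor being deeper when the smallest $\sigma_i$ containing $e$ is smaller), times $\Tscr(S\ssm\sigma_n)$. Quotienting by $\G(S)^\ell_{\sigma_0}=\ZZ^{v(\sigma_0)}$, which acts by translation on the $v(\sigma_0)$-factors, turns those factors into open disks (cones on $|B\ZZ|$ as in Example \ref{example:easy}), while the remaining $\HH\cup\{\infty\}$-factors stay contractible; the total space is then a product of contractible spaces over the contractible Teichm\"uller base, hence contractible. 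The main technical obstacle is rigorously arranging (i) and (iv) in tandem: the delicate collar-lemma-type length estimates ensuring that incomparable short multicurves cannot coexist are standard in the analytic theory of augmented Teichm\"uller space (Bers, Abikoff, Harvey) but were not explicated in the excerpt.

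Having verified the hypotheses, Theorem \ref{thm:emcov2} represents the stacky homotopy type of $\Mcalbar_{g,[n]}=\G(S)\bs\overline{\Tscr}(S)$ by the classifying space of the resulting category $\Sfrak$, whose objects are the strata (equivalently the elements of $\Ccal^*(S)$) and whose morphisms $\sigma\to\sigma'$ are right cosets $[\g]\in\ZZ^{v(\sigma')}\bs\G(S)$ with $\g\Tscr(S\ssm\sigma)\ge\Tscr(S\ssm\sigma')$, which unwinds to $\g\sigma\subset\sigma'$. This is precisely the category $\Cfrak^*(S)$ of the statement. Finally, since $\Mcalbar_{g,[n]}$ is an analytic Deligne--Mumford stack, the stabilizer map in (AL$_3$) is always an isomorphism (Remark \ref{rem:stacky}), so the stacky homotopy type faithfully captures the orbifold.
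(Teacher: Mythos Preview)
Your proposal is correct and follows precisely the approach the paper takes: apply Theorem \ref{thm:emcov2} to the $\G(S)$-action on $\overline{\Tscr}(S)$ with link subgroups $\ZZ^{v(\sigma)}$, and read off that the resulting category $\Sfrak$ is $\Cfrak^*(S)$. The paper itself is terser than you are---it simply asserts that suitable regular neighborhoods $U_\sigma$ exist satisfying (i)--(iv) and then invokes Theorem \ref{thm:emcov2}---so your plumbing-coordinate sketch of why (i)--(iv) hold is a welcome elaboration rather than a departure.
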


We remind the reader that the Deligne-Mumford stack  $\Mcalbar_{g,[n]}$ is not reduced as such when  $(g,n)$ has the value $(0,3)$ (a singleton whose stabilizer is the symmetric group on three elements) or is of  hyperelliptic type $(1,1)$ or $(2,0)$ (then the mapping class group has a center of order two acting trivially).

\section{The homotopy type of a Baily-Borel compactification}
In this section we are going to apply Theorem \ref{thm:emcov2} to a Baily-Borel compactification. To this end we review the basic inputs and properties of that construction, but we have tried to couch these in geometric terms, avoiding the use of root systems.
The point of departure is a connected  linear reductive algebraic group  $\Gcal$ defined over $\QQ$ whose center is anisotropic over $\QQ$ (which means that the Lie  group $G$ underlying $\Gcal (\RR)$ has compact center). We assume that the symmetric space $\XX$ of $G$ (`the space of maximal compact subgroups of $G$') comes with a $G$-invariant complex structure. This turns $\XX$  into a bounded symmetric domain.
We regard $\XX$ as an open subset of its compact dual $\check{\XX}$. This is a complex projective manifold that is homogeneous for $G_\CC$ (the complex Lie group underlying $\Gcal (\CC)$) and the $G_\CC$-stabilizer of a point of $\XX$  is the complexification of its $G$-stabilizer.

\subsection*{Structure of maximal parabolic subgroups}
Let $P$ be a maximal proper parabolic subgroup of $G$ defined over $\QQ$ (i.e.,  the group of real points of such a subgroup of $\Gcal$).  We associate with $P$ the following groups defined over $\QQ$:

\begin{tabular}{r l}
$R_u(P)$\,: &\!\!the  unipotent radical of  $P$.\\
$U_P$\,:  &\!\!the center of $R_u(P)$. This is a  vector group that is never trivial.\\
$V_P$\,: &\!\!the quotient $R_u(P)/U_P$. This is  a (possibly trivial) vector group.\\
$L_P$\,: &\!\!the  \emph{Levi quotient} $P/R_u(P)$ of  $P$. It is a reductive group.\\
$M_P^h$\,: &\!\!the kernel of the action of $L_P$ on $\ufrak_P=\Lie(U_P)$ via the adjoint\\ 
&\!\!representation. The superscript $h$ refers to \emph{hermitian} or \emph{horizontal}.\\
$P^h$\,: &\!\!the preimage of $M_P^h$ in $P$,  in other words,  the kernel of the action\\
&\!\!of $P$ on $\ufrak_P$ via the adjoint representation.\\
$A_P$\,: &\!\!the $\QQ$-split center of $L_P$. This is a copy of $\RR^\times$.\\
$M_P^\ell$\,: &\!\!the commutator subgroup of the centralizer of $M_P^h$ in $L_P$. The \\
&\!\!superscript $\ell$  stands for \emph{link} or \emph{linear}. It has compact center.\\
$L_P^\ell$\,: &\!\!the almost product $M_P^\ell A_P=A_PM_P^\ell$.\\
$P^\ell$\,: &\!\!the preimage of $L_P^\ell$ in $P$.\\
$G(P)$\,: &\!\!the quotient $P/P^\ell=L_P/L_P^\ell$. The composite $M_P^h\subset L_P\twoheadrightarrow G(P)$ is \\
&\!\!onto with finite kernel.\\
\end{tabular}
\newline
Then $P$ acts transitively on $\XX$ and the $P^\ell$-orbits define a holomorphic 
$P$-equivariant fibration of $\XX$, $\pi_P^G:\XX\to \XX (P)$, where $\XX(P)$ is defined as an orbit space. This orbit space  is called a \emph{rational boundary component} of $\XX$ (or rather, of the pair $(\XX, \Gcal)$). It is clear that the $P$-action on $\XX(P)$ is through $G(P)$. This action is transitive  and this realizes  $\XX(P)$ as the bounded symmetric domain associated with $G(P)$. So $\XX(P)$ has its own rational boundary components.

We have  in $\ufrak_P=\Lie (U_P)$  naturally defined a convex open cone $C_P$ that is a $P$-orbit for the adjoint representation. This representation
evidently factors through the Levi quotient $L_P$, but its subgroup  $L_P^\ell=M_P^\ell.A_P$ is still transitive on $C_P$. This cone can be understood as the  $P^h$-orbit space of  $\XX$, the more precise statement being that the semi-subgroup $P^h\exp{(\sqrt{-1}C_P)}\subset G_\CC$ (as acting on $\check{\XX}$)  preserves $\XX$, and makes it in fact an orbit of this semigroup and that we have a $P$-equivariant (real-analytic) bundle $\Iscr_P : \XX \to C_P$ whose fibers are the $P^h$-orbits.  The cone $C_P$ is
self-dual: there is a $P$-equivariant  (but in general nonlinear)  isomorphism of $C_P$ onto its open dual $C_P^\circ\subset\ufrak_P^\vee$,
(i.e.,  the set real forms on $\ufrak_P$ that are positive on $\overline C_P\ssm\{ 0\}$). 

\subsection*{Comparable pairs of parabolic subgroups}
We denote by $\Pscr_{\max}(\Gcal)$  the collection of maximal proper $\QQ$-parabolic subgroups of $\Gcal$ and  identify this set with the corresponding collection of subgroups of $G$.
Since any $P\in \Pscr_{\max}(\Gcal)$ can be recovered from $U_P$ or $\ufrak_P$ as its stabilizer,  a partial order on $\Pscr_{\max}(\Gcal)$ is defined by 
 letting $P\ge Q$ mean that $U_P\supset U_Q$. This is equivalent to: $P^{\ell}\supset Q^{\ell}$ and also to $P^h\subset Q^h$ (but this does not imply that
$R_u(P)\supset R_u(Q)$). 
From the second characterization we see that $P\ge Q$ implies that the projection $\pi_P^G:\XX\to \XX (P)$ factors through $\pi_Q^G:\XX\to \XX (Q)$. The resulting factor $\pi^Q_P: \XX (Q)\to \XX (P)$  then defines a rational boundary component of $\XX (Q)$ of which the associated 
maximal $\QQ$-parabolic subgroup of $G(Q)$ is the image of $P\cap Q$ in  $Q/Q^\ell=G(Q)$. We shall denote that subgroup by $P_{/Q}$.  
The map $P\in \Pscr_{\max}(\Gcal)_{\ge Q} \mapsto P_{/Q}\in \Pscr (\Gcal (Q))$  thus defined is an isomorphism of partially ordered sets.
Note that $P\ge Q$ implies $\XX(P)\le \XX(Q)$.

Let $P, Q\in  \Pscr_{\max}(\Gcal)$ be such that $P\ge Q$. We then have inclusions 
\[
U_Q\subset U_P\cap Q^\ell\subset U_P\subset Q,
\]
where the last inclusion follows from the fact that $U_P$ stabilizes $\ufrak_Q$. The image $U_P/(U_P\cap Q^\ell)$ of $U_P$ in $Q/Q^\ell=G(Q)$ is the
center $U_{P_{/Q}}$ of $R_u(P_{/Q})$ and the projection
\[
c^P_Q: \ufrak_P\to \ufrak_P/(\ufrak_P\cap \qfrak^\ell)\cong \ufrak_{P_{/Q}}.
\] 
maps $C_P$ onto the cone $C_{P_{/Q}}$ that is attached to $P_{/Q}$. This projection fits in a commutative diagram:
\begin{equation}\label{eqn:cone}
\begin{CD}
\XX @>{\pi^G_Q}>> \XX(Q)\\
@V{\Iscr_P}VV  @VV{\Iscr_{P_{/Q}}}V\\
C_P@>{c^P_Q}>>C_{P_{/Q}}
\end{CD}
\end{equation}
Since $\Iscr_P:\XX\to C_Q$ forms the $P^h$-orbit space and $P^h\subset Q^h$, $\Iscr_P$ 
factors through $\Iscr_Q :\XX\to C_P$ and so there is an induced map $\Iscr^P_Q: C_P\to C_Q$. This map is nonlinear in general and is in fact the `adjoint' of the inclusion $C_Q\subset C_P$ via self-duality:  
$C_P\cong C_P^\circ\twoheadrightarrow C_Q^\circ\cong C_Q$. Since $Q^\ell\subset P^\ell$, the adjoint action of  $Q^\ell$ on $\pfrak$ preserves $\ufrak_P$ and $C_P\subset \ufrak_P$. It clearly also preserves the flag of  subspaces $\{0\}\subset  \ufrak_Q\subset\ufrak_P\cap\qfrak^\ell\subset \ufrak_P$
and it will act as the identity on the last quotient $\ufrak_P/(\ufrak_P\cap \qfrak^\ell)\cong \ufrak_{P_{/Q}}$. In fact the map $c^P_Q: C_P\to C_{P_{/Q}}$ is the formation of the $Q^\ell$-orbit space of $C_P$. If we restrict this action of $Q^\ell$  to $R_u(Q)$, then $R_u(Q)$ acts trivially on the successive quotients of this flag and the map 
\[
(\Iscr^P_Q,c^P_Q): C_P\to  C_Q\times C_{P_{/Q}}
\] 
is the formation of the $R_u(Q)$-orbit space of $C_P$. The image of $R_u(Q)$
in $\GL (\ufrak_P)$ is unipotent and this group acts  freely on $C_P$ (this is explained in a more general setting in \S 5 of \cite{looijenga:conepaper}:
in the notation of that paper the above flag is $\{0\}\subset V_F\subset V^F\subset V$, where $V=\ufrak_P$, $C=C_P$ and $F=C_Q$). In particular, 
the map $C_P\to  C_Q\times C_{P_{/Q}}$ is locally trivial with fiber an affine space.

\begin{example}[The symplectic group]\label{example:sp1}
Let $(\Vcal, \langle\, ,\, \rangle)$ be a symplectic vector space over $\QQ$ of dimension $2g$ and take for $\Gcal$ its automorphism group $\Spcal (\Vcal)$. So $G=\Sp(V)$, where $V=\Vcal(\RR)$. The embedding $\sym_2V\hookrightarrow\GLfrak (V)$ which assigns to $a^2\in\sym_2V$ the endomorphism $x\mapsto \langle x, a\rangle a$ maps onto the Lie algebra $\gfrak$ of $\Sp(V)$ and we shall identify the two.

The compact dual $\check{\XX}(V)$ is the space of isotropic complex $g$-planes $F\subset V_\CC$  and  the symmetric domain 
of $\Sp(V)$ is the open subset  $\XX(V)\subset \check{\XX}(V)$ of $F$ on which the Hermitian form $v\in V_\CC\mapsto \sqrt{-1}\langle v, \bar v\rangle\in\CC$ is positive definite.  A maximal proper $\QQ$-parabolic subgroup of $\Sp(V)$ is the $\Sp(V)$-stabilizer (denoted $P_I$) of a nonzero isotropic subspace $I\subset V$ defined over $\QQ$ and vice versa. The associated holomorphic fibration is the  projection $\pi_P : \XX\to \XX(I^\perp/I)$ which sends $F$ to the image of $F\cap I^\perp_\CC\to  (I^\perp/I)_\CC$.

The unipotent radical $R_u(P_I)$ of $P_I$  is the subgroup that acts trivially on $I$ and $I^\perp/I$ (the symplectic form determines an isomorphism  $V/I^\perp\cong I^\vee$ and so this group then automatically acts trivially on $V/I^\perp$).
The center $U_I$ of $R_u(P_I)$ is the subgroup that acts trivially on $I^\perp$ and its (abelian) Lie algebra $\ufrak_I$ can  be identified with 
$\sym_2 I\subset \sym_2V\cong\gfrak$. The cone  $C_I\subset \ufrak_I$ is the cone of positive  definite elements of $\sym_2 I$. The dual cone $C^\circ_I\subset \sym_2 \Hom(I,\RR)$ is the space of positive definite quadratic forms on $I_\RR$ and the duality isomorphism 
$C_I\cong C^\circ_I$ comes from the fact that a positive definite quadratic form on a finite dimensional real vector space determines one on its dual. We identify $R_u(P_I)/U_I$ with a group of elements in  $\GL (I^\perp)$ which act trivially on both $I$ and $I^\perp/I$; this group is abelian  and  its Lie algebra can be identified with $\Hom (I^\perp/I, I)\cong (I^\perp/I)\otimes I$.

The Levi quotient $L_I$ of $P_I$ can be identified $\GL (I)\times \Sp(I^\perp/I)$. The split radical $A_I$ of $L_I$ is the group  of  scalars in $\GL(I)$ (a copy of $\RR^\times$), its horizontal subgroup $M_I^h$  is $\{ \pm 1_I\}\times \Sp (I^\perp/I)$ and its link subgroup $M_I^\ell=\SL(I)$. 
Note that  $G (P_I)=L_I/A_I.M_I^\ell=\Sp (I^\perp/I)$ (which is indeed in an obvious way a quotient of $M_I^h$) and that
$P_I^h$ resp.\ $P_I^\ell$ is the group of symplectic transformations of $V$ that preserve $I$ and act on $I$  as $\pm 1$ resp.\ on $I^\perp/I$  as the identity. 

The projection $\Iscr_I: \XX\to C_I$ is obtained as follows. Let $F\subset V_\CC$ represent an element of $\XX$. Recall that $v\in F\mapsto \half\sqrt{-1}(v, \bar v)$ is a positive definite hermitian form on $F$. The map $F\to (V/I^\perp)_\CC\cong \Hom_\RR(I, \CC)$ is onto with kernel
$F\cap I^\perp_\CC$, so if we identify  $\Hom_\RR(I, \CC)$ with the  orthogonal complement of $F\cap I^\perp_\CC$ in $F$ we get a Hermitian form on $\Hom_\RR (I, \CC)$. The real part of this form defines a positive definite element  of $\sym_2 I$, i.e., an element of $C_I$.

Finally  the partial order relation $P_J\le P_I$ means  simply $J\subset I$. In that case $P_J^\ell$ (the subgroup of $\Sp (V)$ which stabilizes $J$ and acts as the identity on $J^\perp/J$)  indeed preserves $I$  and the image of this action is the full subgroup of $\GL(I)$ which  stabilizes $J$ and acts as the identity on $I/J$. The transformations that also act as the identity on $I$ come from $R_u(P_J)$. The flag defined by $P_J$ in $\ufrak_I=\sym_2I$ is
$\{0\}\subset \sym_2J\subset I\circ J\subset \sym_2I$. If we view $a\in C_I$ as a positive definite quadratic form on $\Hom(I,\RR)$, then the 
subspace $\Hom (I/J, \RR)\subset \Hom(I,\RR)$ has an orthogonal complement with respect to $a$ which maps isomorphically onto $\Hom (J, \RR)$. In other words, 
there is unique section $s$ of $I\to I/J$ and unique
$a'\in C_J$ and $a''\in  C_{I/J}$ such that $s=a'+s_*(a'')$. The resulting projection $C_I\to  C_J\times
C_{I/J}$ is then clearly the formation of the $R_u(P_J)$-orbit space (via which it is a torsor for the vector space $\Hom(I/J, I)$).
The first factor is the nonlinear map $\Iscr^I_J: C_I\to  C_J$ and the 
 second factor is the natural map $c^I_J: C_I\to  C_{I_{/J}}$.
 \end{example}

\subsection*{The  Satake extension} Without loss of generality we may and will assume that $\Gcal$ is almost $\QQ$-simple. We put $\Pscr_{\max}^*(\Gcal):=\Pscr_{\max}(\Gcal)\cup \{\Gcal\}$ and observe that the notions we defined for a member of $\Pscr_{\max}(\Gcal)$ extend in an almost obvious way to $\Pscr_{\max}^*(\Gcal)$. For instance, $R_u(G)=\{1\} $ and so $C_G=\{0\}$, $G(G)=G$ and (hence) $\XX(G):=\XX$.

The \emph{Satake extension} of $\XX$ is a topological space $\XX^\bb$ that contains $\XX$ as an open-dense subset and comes with a stratification:
\[
\XX^\bb=\textstyle{\coprod}_{P\in\Pscr_{\max}^*(\Gcal)} \XX(P),
\]
where the topology on each stratum is the usual one.   For what follows we need a good understanding of the topology on $\XX^\bb$ and so let us briefly review this here.  The incidence relation $\ge $ for the strata will be opposite to the partial order on $\Pscr_{\max}^*$: $\XX(P)\le \XX(Q)$ if and only if $P\ge Q$ (indeed,  the minimal element  $G$ of $\Pscr_{\max}^*$ corresponds to the open subset $\XX=\XX(G)$). So for any $P\in \Pscr_{\max}^*(\Gcal)$, the union of strata containing $\XX(P)$ in its closure is $\Star (\XX (P))= \cup_{Q\le P} \XX (Q)$.
The projections 
$\pi^{Q}_{P} : \XX (Q)\to \XX (P)$ have the property that $\pi_Q^{R}\pi^{Q}_{P} =\pi_{P}^{R}$ when $P\ge Q\ge R$ and hence the $\pi^{Q}_{P}$ combine to form a retraction
\[
\pi_P=:\cup_{Q\le P}\; \pi^{Q}_{P}: \Star (\XX (P)) \to \XX (P)
\]
with the property that $\pi_P\pi_Q=\pi_P|\Star \XX (Q)$ when $Q\le P$.  

The topology on $\XX^\bb$ can be described in terms of cocores.  A \emph{cocore} of $C_P$ (with respect to the $L^\ell_P$-action on $C_P$) is an open subset  $K\subset C_P$ which contains an orbit of an arithmetic subgroup of  $L^\ell_P$ and is such that $\overline{C}_P+K\subset K$. We refer to 
\cite{amrt} for the following basic properties: If $K$ and $K'$ are cocores, then so are $K\cap K'$,  the convex hull of $K\cup K'$ and $\lambda K$ for any $\lambda>0$. Moreover, there exists a 
$0<\lambda_1<\lambda_2$ such that $\lambda_1K\subset K'\subset \lambda_2K$. 
When $Q\le P$, then $c^P_Q$  maps a cocore $K$ in $C_P$ to one in $C_Q$.

For  any cocore $K$, $\Iscr_P^{-1}K$ is invariant under the preimage of $M^h_PM^\ell_P$ in $P$ (this a is normal subgroup of $P$ of codimension one). It maps under $\pi^G_P $ onto $\XX(P)$ and
so 
\[
(\Iscr_P^{-1}K)^\bb:=\textstyle{\coprod}_{Q\le P}\, \pi^G_Q \Iscr_P^{-1}K\subset  \Star (\XX (P))
\]
contains $\XX (P)$. The topology of $\XX^\bb$ at $\XX (P)$ is then characterized by the fact that for every $z\in 
\XX (P)$ the collection $U^\bb(K, V):=(\Iscr_P^{-1}K)^\bb\cap \pi_P^{-1}V$
where $K$ runs over the cocores in $C_P$ and $V$ over the neighborhoods of $z$ in $\XX (P)$ is a neighborhood basis of $z$ in $\XX^\bb$. 
With this topology, $\Star (\XX (P))$ is open subset of $\XX^\bb$, $\XX(P)$ is locally closed in $\XX^\bb$ and the induced topology on $\XX^\bb$ is the one that it already has a symmetric domain. 
It is clear that $\Gcal(\QQ)$ acts on $\XX^\bb$ by homeomorphisms. The space $\XX^\bb$ is Hausdorff, but rarely locally compact. 

\subsection*{Geodesic retraction}The projection $\pi_P$ is a \emph{geodesic retraction}: for every $x\in\XX$ is there is a canonical geodesic  $\g_{P,x}:[0,\infty)\to \XX$ through that point with $\lim_{t\to\infty}  \g_{P,x}(t)=\pi_P(x)$. A geodesic through $x$ is given by
a one-parameter subgroup of $G$ that is `perpendicular' to the compact subgroup $G_x$; in the present case it is the one in $P$ whose projection in $L_P$ is given by the action of $A_P$. The  image of this geodesic under the projection $\Iscr_P: \XX\to C_P$ is then just the ray that lies on the line spanned by $\Iscr_P(x)$.
These geodesics are defined on all of  $\Star (\XX (P))$ (albeit that they will be constant on $\XX(P)$) and  depend continuously on their point of departure. So this defines a $(\G\cap P)$-equivariant deformation retraction of $\Star (\XX (P))$ onto $\XX(P)$. (We can now also be explicit about the map $\Iscr: \XX\to  C^\circ_P$: fix a $G$-invariant hermitian metric on $\XX$. For every $u\in\ufrak_P$, denote by
$u_x\in  T_x\XX$ the infinitesimal displacement defined by the action. Then  $\Iscr (x)(u)$ is the imaginary part of 
$h_x(u_x,\dot\g_{P,x}(0))$.)

Since a cocore $K$ of $C_P$ is invariant under  multiplication by scalars $\ge 1$, the geodesic deformation retraction preserves $(\Iscr_P^{-1}K)^\bb$ and so restricts to one of $(\Iscr_P^{-1}K)^\bb$ onto $\XX(P)$.
For any $Q\le P$, we have $K+C_Q\subset K$ and from this one may deduce that the deformation retraction of $\Star (\XX (Q))$ onto 
$\XX(Q)$ also preserves $(\Iscr_P^{-1}K)^\bb\cap \Star (\XX(Q))$. Moreover, the diagram (\ref{eqn:cone}) specializes to 
\[
\begin{CD}
\Iscr_P^{-1}K @>{\pi^G_Q}>> \Iscr_{P_{/Q}}^{-1}K_{P_{/Q}}\\
@V{\Iscr_P}VV  @VV{\Iscr_{P_{/Q}}}V\\
K@>{c^P_Q}>>K_{P_{/Q}}
\end{CD}
\]
where $K_{P_{/Q}}:=c^P_Q(K)$ is a cocore in $C_{P_{/Q}}$.
Since the top arrow is onto, it follows that $\Iscr_{P_{/Q}}^{-1}K_{P_{/Q}}=(\Iscr_P^{-1}K)^\bb\cap \XX(Q)$. In particular, every stratum of $(\Iscr_P^{-1}K)^\bb$ is given by a cocore.

\subsection*{The Baily-Borel compactification}
Suppose $\G\subset \Gcal (\QQ)$  is an arithmetic subgroup. The central result of the  Satake-Baily-Borel theory asserts that the orbit space 
$\G\bs \XX^\bb$  is a compact topological space, the \emph{Baily-Borel compactification} of $\G\bs \XX$, which underlies the structure of a 
complex projective variety. Note that $Z_\G (\XX(P))$ contains  $\G\cap P^\ell$ as a subgroup of finite index.
A key step in the proof is the local version which states that the orbit space $(\G \cap P^\ell)\bs \Star (\XX (P))$ is locally compact (and has in fact the 
structure of normal complex-analytic variety). The $(\G\cap P)$-equivariant  geodesic deformation retraction $\pi_P$ descends to a 
$\G (P)$-equivariant geodesic deformation retraction $(\G\cap P^\ell)\bs \Star (\XX (P))\to \XX (P)$. 

The image  of $\G\cap P$ in $L^\ell_P$ is an arithmetic subgroup and so there exist cocores $K_P$ in $C_P$ that are invariant under the image of 
$\G\cap P$ in $L^\ell_P$. For such a cocore,  $U_{K_P}:=\Iscr_P^{-1}K_P$ is of course invariant under $\G\cap P$ and what we just asserted about 
$\Star (\XX (P))$ also holds for $U_{K_P}^\bb$. In particular, $U_{\XX(P)}(K):=(\G\cap P^\ell)\bs U_{K_P}^\bb$  can be regarded as a regular 
open neighborhood of $\XX(P)$  in $(\G\cap P^\ell)\bs\XX^\bb$. The retraction $\pi_P$  induces a $\G (P)$-equivariant geodesic deformation retraction  
$U_{\XX(P)}(K)\to \XX(P)$. Since $\XX(P)$ is contractible, so will be $U_{\XX(P)}(K)$. 

We can take $K_P$ so small as  to ensure that every $(\G\cap P)$-orbit in $U_{K_P}^\bb$ is the intersection of $U_{K_P}^\bb$ with a $\G$-orbit. 
This implies that if for some $\g\in\G$, $\g U_{K_P}^\bb$ meets $U_{K_P}^\bb$, then $\g\in P$ and in particular $\g U_{K_P}^\bb=U_{K_P}^\bb$.  
So for a stratum $S=\G(P)\bs\XX(P)$ of $\G\bs \XX^\bb$,  $U_S(K):=(\G\cap P)\bs U_{K_P}^\bb=\G(P)\bs U_{\XX(P)}(K)$ is a regular open neighborhood of $S$  in 
$\G\bs\XX^\bb$ and $\pi_P$ will induce a deformation retraction of  $U_S(K)$ onto $S$.

\begin{definition}
Let $\G\subset \Gcal (\QQ)$ be a subgroup.
The \emph{Satake category} $\Sfrak_{\G}$ of the pair $(\Gcal, \G)$ is the small category whose object set is $\Pscr_{\max}^*(\Gcal)$ and for which 
a morphism $P\to P'$ is given by a $\gamma\in \G$ with the property that ${}^\gamma P (:=\gamma P \gamma^{-1})\le P'$ (or equivalently,  
$\g\XX (P)\ge \XX (P')$). The \emph{Charney-Lee category} $\Wfrak_\G$ of $\Sfrak_{\G}$ has the same object set, but a $\Wfrak_{\G}$-morphism 
 $P\to P'$ is given by a right coset $(\G\cap P'{}^\ell)\gamma\in (\G\cap P'{}^\ell)\bs \G$ with the  property that  ${}^\gamma P \le P'$.
\end{definition}

So a  $\Wfrak_\G$-morphism  $P\to P'$ is almost tantamount to giving  a rational boundary component $\XX(Q)\le \XX(P)$ plus  an isomorphism of  $\XX(Q)$ onto $\XX(P')$ that is induced by an element of $\G$.

We have an obvious functor $F: \Sfrak_{\G}\to \Wfrak_\G$. The fiber of the identity of $P\in \Pscr_{\max}^*(\Gcal)$ in $\Sfrak_{\G}$,  
when  viewed as an object of $\Wfrak_\G$ is equal to $\G\cap P^\ell$. It is clear that for any subgroup  $\G_1\subset \G$,  
$\Sfrak_{\G_1}$ resp.\  $\Wfrak_{\G_1}$ appears as a subcategory of $\Sfrak_{\G}$  resp.\ $\Wfrak_{\G}$.

\begin{theorem}\label{thm:main}
Let $\G$ be an arithmetic subgroup of $\Gcal (\QQ)$. The classifying space functor applied to the embedding of $\G$ in $\Sfrak_{\G}$ (as the automorphism group of the object defined by $\Gcal$) is a homotopy equivalence and so $|B\Sfrak_{\G}|$ represents the homotopy type of the
Deligne-Mumford stack  $\G\bs \XX$. The Baily-Borel compactification $\G\bs \XX^\bb$ of $\G\bs \XX$ comes with a natural structure of a stacky homotopy type that is represented by $|B\Wfrak_\G|$
such that  the classifying space construction applied to the  functor $\Sfrak_{\G}\to \Wfrak_\G$
reproduces the stacky homotopy type of the inclusion $\G\bs \XX\subset \G\bs \XX^\bb$.  In particular, the rational cohomology algebra of $\G\bs \XX^\bb$ is that of $|B\Wfrak_\G|$.
\end{theorem}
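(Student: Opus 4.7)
The approach is to apply Theorem~\ref{thm:emcov2} to the $\G$-action on $X=\XX^\bb$ with stratification $\Scal=\{\XX(P):P\in\Pscr_{\max}^*(\Gcal)\}$, link subgroups $\G^\ell_{\XX(P)}:=\G\cap P^\ell$, and neighborhoods $U_{\XX(P)}:=U_{K_P}^\bb$, where the cocores $K_P\subset C_P$ are chosen $(\G\cap P)$-invariant, $\G$-equivariant across conjugates, and small enough that every $(\G\cap P)$-orbit in $U_{K_P}^\bb$ is cut out by a $\G$-orbit (as in the Satake-Baily-Borel local discussion of the excerpt) and so that $U_{K_P}^\bb\cap U_{K_{P'}}^\bb=\emptyset$ whenever $P,P'$ are incomparable in $\Pscr_{\max}^*(\Gcal)$. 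Conditions (ii) and (iii) of the theorem then hold by construction and by the local Baily-Borel statement that $(\G\cap P^\ell)\bs U_{K_P}^\bb$ is paracompact Hausdorff with a proper $\G(P)$-action; condition (i) is the disjointness just stipulated, which can be arranged by the structure of the Satake topology (points of $U_{K_P}^\bb$ approach $\XX(P)$ along cocore rays, and incomparable corners are cleanly separated for sufficiently deep cocores).

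The main obstacle is condition (iv): the contractibility of $(\G\cap P_0^\ell)\bs\bigcap_{i=0}^n U_{K_{P_i}}^\bb$ for every chain $P_0<P_1<\cdots<P_n$ in $\Pscr_{\max}^*(\Gcal)$ (equivalently $\XX(P_0)>\cdots>\XX(P_n)$ in the stratum order). I would prove this by induction on $n$. The base $n=0$ is the deformation retraction of $U_{K_{P_0}}^\bb$ onto the contractible $\XX(P_0)$ via $\pi_{P_0}$, already carried out in the excerpt. For the inductive step, the preservation property recorded there---namely that $\pi_{P_0}$ preserves $(\Iscr_{P_i}^{-1}K_{P_i})^\bb\cap\Star(\XX(P_0))$ for each $i$, since $P_0\le P_i$ and $K_{P_i}+\overline{C}_{P_0}\subset K_{P_i}$---gives a deformation retraction of $\bigcap_{i=0}^n U_{K_{P_i}}^\bb$ onto $\XX(P_0)\cap\bigcap_{i=0}^n U_{K_{P_i}}^\bb$. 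Since $P_0^\ell$ stabilizes each fiber of $\pi^G_{P_0}$, the subgroup $\G\cap P_0^\ell$ acts trivially on $\XX(P_0)$, so passing to the $(\G\cap P_0^\ell)$-quotient is inert on the retract. Using the commutative diagram~(\ref{eqn:cone}), this retract equals $\bigcap_{i=1}^n\Iscr_{P_i/P_0}^{-1}(c^{P_i}_{P_0}K_{P_i})\subset\XX(P_0)$, the ``interior piece'' in $\XX(P_0)$ of the analogous chain intersection for $P_1/P_0<\cdots<P_n/P_0$ in $\Pscr_{\max}^*(\Gcal(P_0))$; a parallel analysis in the smaller-rank domain $\XX(P_0)^\bb$, using the locally trivial cone bundle $C_{P_i}\to C_{P_0}\times C_{P_i/P_0}$ and the starlikeness $K+\overline{C}\subset K$ of cocores, reduces the contractibility of this interior piece to the inductive hypothesis applied to the chain of length $n-1$.

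Theorem~\ref{thm:emcov2} then yields a stacky homotopy type for $\G\bs\XX^\bb$ represented by the category with objects $\Pscr_{\max}^*(\Gcal)$ and morphisms $P\to P'$ given by cosets $[\gamma]\in(\G\cap P'^\ell)\bs\G$ with ${}^\gamma P\le P'$---this is precisely $\Wfrak_\G$. For the first assertion (that the embedding of $\G=\aut_{\Sfrak_\G}(\Gcal)$ in $\Sfrak_\G$ induces a homotopy equivalence $|B\G|\simeq|B\Sfrak_\G|$, which then represents the stack $\G\bs\XX$ since $\XX$ is contractible), I would invoke Quillen's Theorem~A: since $\Gcal$ is the minimum of $\Pscr_{\max}^*(\Gcal)$, every $\gamma\in\G$ defines a $\Sfrak_\G$-morphism $\Gcal\to P$, so the comma category $\iota/P$ has $\G$ as object set with a unique morphism $\gamma_1\to\gamma_2$ (given by $\gamma_2^{-1}\gamma_1$), hence is contractible. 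The third assertion follows from the functoriality clause of Theorem~\ref{thm:emcov2} applied to the open $\G$-invariant union of strata $\XX\subset\XX^\bb$, which on the categorical side is precisely the forgetting-quotient functor $\Sfrak_\G\to\Wfrak_\G$.
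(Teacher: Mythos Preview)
Your overall strategy is the paper's: apply Theorem~\ref{thm:emcov2} with link groups $\G\cap P^\ell$ and neighborhoods $U^\bb_{K_P}$. Your treatment of the first assertion via Quillen's Theorem~A for $\iota:\G\hookrightarrow\Sfrak_\G$ is correct (the paper uses the retraction $R:\Sfrak_\G\to\G$ instead, but these are two sides of the same coin), and your handling of conditions (i)--(iii) is fine.

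The gap is in your induction for (iv). After the geodesic retraction you correctly land on the interior piece $\bigcap_{i=1}^n\Iscr_{R_i}^{-1}K'_i\subset\XX(P_0)$ with $R_i=(P_i)_{/P_0}$ and $K'_i=c^{P_i}_{P_0}K_{P_i}$. But this is not of the form your inductive hypothesis addresses: condition (iv) for the length-$(n-1)$ chain $R_1<\cdots<R_n$ asserts contractibility of the \emph{quotient} $(\G'\cap R_1^\ell)\bs\bigcap_i U^\bb_{K'_{R_i}}$, which involves both a nontrivial link group and boundary strata, whereas you need the contractibility of a plain open subset of the open stratum $\XX(P_0)$. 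No further retraction is available to shorten this interior piece, so the induction does not close.

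The paper proves the contractibility of the interior piece directly. After the same retraction (which amounts to taking $P_0=G$), one uses the factorizations $\Iscr_{P_i}=\Iscr^{P_n}_{P_i}\circ\Iscr_{P_n}$ to rewrite $\bigcap_{i=1}^n U_{K_{P_i}}=\Iscr_{P_n}^{-1}(K)$ with $K=\bigcap_{i=1}^n(\Iscr^{P_n}_{P_i})^{-1}K_{P_i}\subset C_{P_n}$; so it suffices that $K$ be contractible. Setting $Q=P_1$, the map $(\Iscr^{P_n}_Q,c^{P_n}_Q):C_{P_n}\to C_Q\times C_{(P_n)_{/Q}}$ is the formation of the $R_u(Q)$-orbit space with affine fibers, and $K$ is $R_u(Q)$-invariant; hence one is reduced to its image, which is open and invariant under translation by $C_Q\times\{0\}$. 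A separate elementary lemma (Lemma~\ref{lemma:contractiblecocore}: an open subset of $C\times U'$ stable under translation by an open convex cone $C\times\{0\}$ has the homotopy type of its projection to $U'$), applied twice, finishes the job. This cone-level argument is the missing ingredient; your mention of the bundle $C_{P_i}\to C_{P_0}\times C_{P_i/P_0}$ and of starlikeness points in the right direction but is invoked at the wrong stage and does not by itself yield a reduction to a shorter chain.
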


\begin{example}[Example \ref{example:sp1} continued]\label{example:sp2}
An object of $\Sfrak_{\G}$ is then given by an isotropic subspace $I\subset V$ and a morphism $I\to J$ by a $\gamma\in\G$ such that
$\gamma I\subset J$. Two such elements $\gamma, \gamma'\in\G$ define the same morphism in the Charney-Lee category precisely if $\gamma' \gamma^{-1}$ preserves $J$ and induces the identity in $J^\perp/J$. 
 \end{example}
 
\begin{proof}[Proof of theorem \ref{thm:main}]
We regard $\G\bs\XX$ as a quotient stack so that its homotopy type as such is given by $B\G$.
Next we observe that the forgetful functor  $R: \Sfrak_{\G}\to \G$ (which forgets $P$) is a retract. 
The fiber of $R$ over the identity of $G$ is the subcategory of $\Sfrak_\pt$ defined by the finite  linear chains in  $\Sfrak_\pt$ that have $G$ as a minimal element. This category has an initial object, namely the identity of $S$ (now viewed as a linear chain of length zero).
This implies that $|B (\G\bs R)|$ is contractible so that  by Thm.\ A of \cite{quillen}, $|BR|$ is a homotopy equivalence.  

The remaining assertions will follow if we verify the hypotheses of Theorem \ref{thm:emcov2} for $\XX^\bb$ with its natural stratification into $\XX(P)$ and take for $\G^\ell_{\XX(P)}:=\G\cap P^\ell$ as our link group. Then $\G_{\XX(P)}/\G^\ell_{\XX(P)}=\G\cap P/\G\cap P^\ell=\G (P)$ acts properly on $\XX(P)$ with a subgroup of finite index acting freely. Since $\XX(P)\le\XX(Q)$ is equivalent to $P^\ell\supset Q^\ell$, we  then have $\G^\ell_{\XX(P)}\supset \G^\ell_{\XX(Q)}$, as required.

For every $P\in \Pscr_{\max}^*(\Gcal)$ we choose in a $\G$-equivariant fashion  an open cocore  $K_P\subset C_P$ (meaning that 
$K_{\gamma P\gamma ^{-1}}=\gamma K_P$). We let $U_{K_P}:=\Iscr_P^{-1}K_P$ and $U_{K_P}^\bb$ be as before. We know that
$U_{K_P}^\bb$ is then open in $\XX^\bb$ and contains $\XX (P)$ as a $(\G\cap P)$-equivariant deformation retract. It is then clear that 
these neighborhoods satisfy properties (i) and  (ii) of Theorem \ref{thm:emcov2}. 

We noted that the orbit space $U_{\XX(P)}(K)=(\G\cap P^\ell)\bs U_{K_P}^\bb$ is an analytic variety with $\G(P)$-action which comes with an analytic $\G(P)$-equivariant retraction $U_{\XX(P)}(K)\to  \XX(P)$.
The group $\G(P)$ acts on $\XX(P)$ as an arithmetic group and hence  this  action is proper with a subgroup of finite index acting freely. The same is then true
for its action on $U_{\XX(P)}(K)$  and so property (iii) is also satisfied.

 On order to check property (iv), consider any chain $P_\pt=(P_0<P_1<\cdots <P_n)$ in $\Pscr_{\max}(\Gcal)$ and
put $U^\bb_{K_{P_\pt}}=\cap_{i=0}^n U^\bb_{K_{P_i}}$. We must show that
$(\G\cap P^\ell)\bs U^\bb_{K_{P_\pt}}$ is contractible. 
For any $x\in U^\bb_{K_{P_\pt}}$, the geodesic $\g_{P_0,x}$ stays in $U^\bb_{K_{P_\pt}}$ and so we have a $(\G\cap P_0^\ell)$-equivariant deformation retraction of $U^\bb_{K_{P_\pt}}$  onto its intersection with $\XX(P_0)$. In particular,  $(\G\cap P^\ell)\bs U^\bb_{K_{P_\pt}}$   has $U^\bb_{K_{P_\pt}}\cap \XX(P_0)$ as deformation retract. 

Since we are now left to prove that  $U^\bb_{K_{P_\pt}}\cap \XX(P_0)$ is contractible, we focus on $\XX(P_0)$ with its $\G(P_0)$-action. This means
that we can pretend that $P_0=G$, so that we must show that $\cap_{i=1}^nU_{K_{P_i}}$ is a contractible subset of $\XX$. The chain $P_\pt$ defines a `flag' of faces $\{0\}=C^+_{P_0}\subset C^+_{P_1}\subset\cdots \subset C^+_{P_n}$.  But then $\cap_{i=1}^n U_{K_{P_i}}$ is equal to 
$U_K$, where $K:=\cap_{i=1}^n (\Iscr^{P_n}_{P_i})^{-1} K_{P_i}\subset C_{P_n}$. 
So it remains to prove that $K$ is contractible: for then so is $U_K$ and we then apply Theorem \ref{thm:emcov2}. 

To this end we write $P$ for $P_n$ and $Q$ for $P_1$. Since $K_{P_i}$ is invariant under $P_i^\ell$, it is also invariant  under $R_u(Q)$ (for $Q\le P_i$). 
Hence $K$ is $R_u(Q)$-invariant. Since  $(\Iscr^P_{Q},c^P_{Q}): C_P\to C_{Q}\times C_{P_{/Q}}$ forms the $R_u(Q)$-orbit space and has 
affine fibers, it suffices to prove that the image of $K$ under this map is contractible.  This image is open and invariant under  translations in 
the convex cone $C_Q\times \{0\}$ and projects in $C_{P_{/Q}}$ onto an open subset invariant under  translations in $C_{P_{/Q}}$. A double application of Lemma \ref{lemma:contractiblecocore} below then finishes the proof. \end{proof}

\begin{lemma}\label{lemma:contractiblecocore}
Let $U$ and $U'$ be a real finite dimensional vector spaces, $C\subset U$ an open  convex cone and $K\subset C\times U'$ an open subset
which is invariant under translations in $C\times\{ 0\}$. Then the projection  $K\xrightarrow{\pi_{U'}} \pi_{U'}(K)$ is a homotopy equivalence.
\end{lemma}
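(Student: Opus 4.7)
The plan is to show $\pi := \pi_{U'}|_K$ has contractible fibers and to construct a continuous global section $s$, which will then serve as a homotopy inverse.

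For each $u' \in \pi_{U'}(K)$, the fiber $K_{u'} := \{c \in C : (c, u') \in K\}$ is a nonempty open subset of $U$ satisfying $K_{u'} + C \subset K_{u'}$, by the translation hypothesis. Fixing any $c_1 \in K_{u'}$, the two-phase path $c \mapsto c + 2tc_1$ on $t \in [0, 1/2]$ followed by $c \mapsto (2-2t)c + c_1$ on $t \in [1/2, 1]$ stays in $K_{u'}$ throughout: each intermediate value decomposes as an element of $K_{u'}$ plus an element of $C$ (either $c \in K_{u'}$ plus $2tc_1 \in C$, or $c_1 \in K_{u'}$ plus $(2-2t)c \in C$). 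This contracts $K_{u'}$ to $\{c_1\}$. Running the same fiberwise motion with $c_1$ replaced by the value $c(u')$ of any continuous section $s(u') = (c(u'), u')$ yields a homotopy $\mathrm{id}_K \simeq s \circ \pi$. Since $\pi \circ s = \mathrm{id}$ automatically, producing such a section $s$ finishes the proof.

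The hard part is constructing $s$ globally. Local sections are immediate: given $u'_0 \in \pi_{U'}(K)$ and $c_0 \in K_{u'_0}$, openness of $K$ yields a neighborhood $V$ of $u'_0$ with $\{c_0\} \times V \subset K$. To patch them I would use paracompactness of $\pi_{U'}(K)$ to pick a locally finite open cover $\{V_\alpha\}$ with constant local sections $c_\alpha \in C$, a subordinate partition of unity $\{\phi_\alpha\}$, and a fixed auxiliary $c_* \in C$. The naive weighted sum $\sum_\alpha \phi_\alpha(u') c_\alpha$ may miss $K_{u'}$ because $K_{u'}$ is not convex, so I set
\[
s(u') = \Bigl(\sum_\alpha \phi_\alpha(u') c_\alpha + \Lambda(u') c_*,\; u'\Bigr),
\]
where $\Lambda : \pi_{U'}(K) \to (0, \infty)$ is continuous and chosen large enough, locally, that the first coordinate dominates some $c_\beta$ (with $\phi_\beta(u') > 0$) in the $C$-order, placing it in $c_\beta + C \subset K_{u'}$. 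The existence of such a $\Lambda$ reduces, by local finiteness of $\{\phi_\alpha\}$ and openness of $C$, to a further partition-of-unity patching of local lower bounds, which is routine.
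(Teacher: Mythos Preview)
Your proof is correct and takes a genuinely different route from the paper's. The paper avoids any partition-of-unity section construction: after reducing to the case where $C$ is nondegenerate, it picks $\phi \in C^\circ$ and uses the homeomorphism $C \cong \PP(C) \times (0,\infty)$ given by $c \mapsto ([c], \phi(c))$ to identify $K$ with the open region $\{(r,y,t) \in \PP(C) \times K' \times (0,\infty) : t > \lambda(r,y)\}$ above the graph of a continuous function $\lambda$. The projection to $\PP(C) \times K'$ is then visibly a homotopy equivalence (the fibers are half-lines, and $\lambda$ furnishes a global section), and so is the further projection to $K'$, since $\PP(C)$ is convex and hence contractible. What the paper's approach buys is a global coordinate system in which no patching is needed and the continuity of the ``floor'' $\lambda$ follows directly from $K$ being open and $C$-invariant. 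Your approach stays intrinsic and does not need the reduction to a pointed cone, at the cost of the partition-of-unity step and the construction of $\Lambda$ (which, incidentally, can be done in one stroke: the threshold function $g(u')$ below which your section fails is upper semicontinuous, and $\sum_\gamma \psi_\gamma(u') M_\gamma$ with local bounds $M_\gamma > g$ on the supports of a subordinate partition $\{\psi_\gamma\}$ already dominates $g$, since each contributing $M_\gamma$ exceeds $g(u')$). Both arguments exploit the same key feature---that each fiber is ``upward-closed'' under $C$-translation---but the paper linearizes this via $\phi$, while you use it directly to build the two-phase contraction and then globalize.
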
 
\begin{proof}
With loss of generality we may assume that $C$ is nondegenerate. Put $K':= \pi_{U'}(K)$ and 
choose $\phi\in C^\circ$. Then the base $\PP(C)$ is a convex open subset of the affine subspace of $\PP(U)$ defined by $\phi\not= 0$ and 
so  $\PP(C)$ is contractible. For every $r\in \PP(C)$ and $y\in K'$ denote by $\lambda (r,y)>0$ the infimum of $\phi$ on the intersection of the ray emanating from $(0,y)$ defined by $r$ with $K$. Then $\lambda$ is continuous and if $p: C\to \PP(C)$ is the obvious projection, then 
$(p, \phi)$ maps $K$ homeomorphically onto the subspace of $\PP(C)\times K'\times (0, \infty)$ consisting of $(r,y,t)$ with $t>\lambda(r,y)$. 
The projection of this image onto $\PP(C)\times K'$ is a clearly a homotopy equivalence. And so is the projection of $\PP(C)\times K'$  onto $K'$.
\end{proof}

\begin{remark}\label{rem:titsbuilding}
We  recall that $\Pscr_{\max}(\Gcal)$ is the vertex set of the \emph{Tits building}  of the $\QQ$-group $\Gcal$.  This is a simplicial complex whose simplices are the linear chains in $\Pscr_{\max}(\Gcal)$ (and so any simplex comes with a total order on its vertex set).  To give such a  linear chain $\Pcal_\pt=(P_0< P_1< \cdots <P_k)$ amounts to giving a proper $\QQ$-parabolic subgroup of $G$ (namely $\cap_i P_i$ ), for  if $P$ is a proper $\QQ$-parabolic subgroup of $G$, then the collection of maximal proper $\QQ$-parabolic subgroups containing $P$ is  a chain in $\Pscr_{\max}(\Gcal)$ and the intersection of its members give us back $P$. In other words, the collection of nonempty linear chains in $\Pscr_{\max}(\Gcal)$ can be identified with
the collection of proper $\QQ$-parabolic subgroups of $G$, even as partially ordered sets,  where the relation \emph{`is a subchain of'} corresponds to
the relation \emph{`contains'}.
\end{remark}

\section{The Satake compactification of $\Acal_g$ according to Charney-Lee}
Denote by $\Vfrak_g$  the category whose objects are pairs $(L\supset I)$, where $L$ is a unimodular symplectic lattice of rank $2g$ and  $I\subset L$ is primitive isotropic sublattice and for which a morphism $(L\supset I)\to (L'\supset I')$ is given by an isomorphism $\phi :L\cong L'$ such that
$\phi (I)\subset I'$. Letting $\mathfrak{Sp}_g(\ZZ)$ denote the groupoid of unimodular symplectic lattices $L$ of rank $2g$ whose morphisms are  symplectic isomorphisms, then we have a forgetful functor $\Vfrak_g\to \mathfrak{Sp}_g(\ZZ)$ defined by $(L\supset I)\mapsto L$. This   is also a homotopy equivalence, because a fiber over $L$ is the PO-set of primitive isotropic sublattices and this has an initial object (namely $0$), so has a contractible geometric realization. Let us write $H$ for the lattice $\ZZ^2$ equipped with its standard symplectic form. Since every unimodular symplectic lattice of rank $2g$ is isomorphic to $H^g$, the full subcategory $\Sp (H^g)\subset \mathfrak{Sp}_g(\ZZ)$ is an equivalence and so the inclusion
$Sp (H^g)\subset \Vfrak_g$ (definined by taking $I=0$ in $H^g$) yields a homotopy equivalence after passing to classifying spaces.

The \emph{Giffen category of genus $g$,} $\Wfrak_g$, is the  category whose objects are  the unimodular symplectic lattices $M$ of rank $\le 2g$ and  for which  a morphism $M\to M'$ is  given by a primitive isotropic sublattice $I\subset M$ and a symplectic  isomorphism  $I^\perp/I\xrightarrow{\cong}M'$ (the composition should be clear). A functor  $F_g: \Vfrak_g\to \Wfrak_g$ is defined by $F_g(L\supset I):= I^\perp/I$. Indeed, for a $\Vfrak_g$-morphism 
$\phi :(L\supset I)\to (L'\supset I')$, we have $I\subset \phi^{-1}I'$ and $J:=\phi^{-1}I'/I$ is then an isotropic subspace of $F_g(L\supset I)=I^\perp/I$ such that $\phi$ induces an isomorphism of $J^\perp/J$  onto $I'{}^\perp/I'=F_g(L'\supset I')$.

We now consider a special case of Example \ref{example:sp1}. We take as our $\QQ$-algebraic group the group $\Spcal_g$ which assigns to a commutative ring  $R$ with unit the group $\Sp (R\otimes H^g)$ so that
$\Sp(H^g)$ is an arithmetic subgroup of $\Spcal_g(\QQ)=\Sp (H^g_\QQ)$. The associated real Lie group  $\Spcal_g(\RR)=\Sp (H^g_\RR)$ has as its symmetric space the domain $\XX_g:=\XX(H^g)$ and  $\Sp(H^g)\bs \XX_g $ can be identified with the moduli space $\Acal_g$ of principally polarized abelian varieties. It is clear that  $\Sfrak_{\Sp(H^g)}$ is the full subcategory of $\Vfrak_g$ whose objects are of the form $(H^g\supset I)$. 
The interpretation of $\Wfrak_{\Sp (H^g)}$ as in  Example \ref{example:sp2}  enables us to define a  functor $\Wfrak_{\Sp (H^g)}\to \Wfrak_g$ by $I\mapsto I^\perp/I$. We then have a commutative diagram of functors
\[
\begin{CD}
\Sfrak_{\Sp(H^g)}@>>>\Vfrak_g\\
@VVV  @VV{F_g}V\\
\Wfrak_{\Sp (H^g)}@>>> \Wfrak_g
\end{CD}
\]
where the vertical arrow on the left is given by Theorem \ref{thm:main}. Since every unimodular symplectic lattice of rank $2g$ is isomorphic to $H^g$, the horizontal arrows are equivalences of categories and so Theorem \ref{thm:main} gives the following rephrasing of  a theorem of  
Charney-Lee \cite{charneylee:satake}:

\begin{proposition}
The  inclusion $\Sp (H^g)\subset \Vfrak_g$ is an equivalence of categories and the stacky homotopy type of the inclusion of $j_g: \Acal_g\subset \Acal_g^\bb$ is reproduced  by applying the classifying space construction applied to the  functor $F_g: \Vfrak_g\to \Wfrak_g$. 
\end{proposition}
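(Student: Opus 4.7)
The bulk of the machinery is already in place from Theorem~\ref{thm:main} and the preceding discussion of the categories, so the plan is simply to verify that both horizontal arrows in the displayed commutative square are categorical equivalences; then Theorem~\ref{thm:main}, applied to the left column and transported along these equivalences to the right column, will yield the assertion about the stacky homotopy type of $j_g$.

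For the claim that $\Sp(H^g)\subset\Vfrak_g$ is an equivalence (in the sense of inducing a homotopy equivalence of classifying spaces), I would factor the inclusion through $\mathfrak{Sp}_g(\ZZ)$. The functor $\Sp(H^g)\to\mathfrak{Sp}_g(\ZZ)$ is an equivalence of categories because $\mathfrak{Sp}_g(\ZZ)$ is a groupoid with a single isomorphism class, represented by $H^g$ and having automorphism group $\Sp(H^g)$. The forgetful functor $\Vfrak_g\to\mathfrak{Sp}_g(\ZZ)$ induces a homotopy equivalence on classifying spaces by the Quillen~A argument already indicated in the text (each fiber is the poset of primitive isotropic sublattices of $L$ and has $0$ as an initial object, hence is contractible). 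Two-out-of-three finishes this part.

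For the top arrow $\Sfrak_{\Sp(H^g)}\hookrightarrow\Vfrak_g$, the objects of $\Sfrak_{\Sp(H^g)}$ are precisely the pairs $(H^g\supset I)$ with $I$ primitive isotropic (the case $I=0$ corresponding to the ambient $\Spcal_g$ itself in $\Pscr_{\max}^*(\Spcal_g)$), and by Example~\ref{example:sp2} a morphism there is given by a $\gamma\in\Sp(H^g)$ with $\gamma I\subset I'$, which matches the $\Vfrak_g$-definition verbatim; essential surjectivity follows since any unimodular symplectic lattice of rank $2g$ is symplectically isomorphic to $H^g$. For the bottom arrow $\Wfrak_{\Sp(H^g)}\to\Wfrak_g$ sending $I\mapsto I^\perp/I$, essential surjectivity uses that any unimodular symplectic lattice of rank $2m\leq 2g$ is realized as $I^\perp/I$ for a suitable primitive isotropic $I\subset H^g$ of rank $g-m$. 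For full faithfulness I would unpack a $\Wfrak_{\Sp(H^g)}$-morphism $I\to I'$---a coset $(\Sp(H^g)\cap P_{I'}^\ell)\gamma$ with $\gamma I\subset I'$---as the $\Wfrak_g$-datum $J:=\gamma^{-1}I'/I\subset I^\perp/I$ together with the symplectic isomorphism $J^\perp/J\xrightarrow{\gamma}I'{}^\perp/I'$, and verify that two representatives $\gamma,\gamma'$ give rise to the same such pair if and only if $\gamma'\gamma^{-1}$ preserves $I'$ and acts trivially on $I'{}^\perp/I'$---precisely the defining condition of $P_{I'}^\ell$.

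The only step that requires any real care is this last unpacking, i.e.\ reconciling the coset description of $\Wfrak_{\Sp(H^g)}$-morphisms inherited from the Charney-Lee category with the $(J,\text{iso})$-description of $\Wfrak_g$-morphisms; once this is done, the commutativity of the square together with Theorem~\ref{thm:main} produces $|BF_g|$ as a model for the stacky homotopy type of $j_g:\Acal_g\subset\Acal_g^\bb$.
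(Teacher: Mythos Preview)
Your proposal is correct and follows essentially the same route as the paper: the paper's argument is simply that the horizontal arrows in the commutative square are equivalences of categories (because every unimodular symplectic lattice of rank $2g$ is isomorphic to $H^g$), so Theorem~\ref{thm:main} applied to the left column transports to the right column. Your write-up supplies more of the verification details (the factorization through $\mathfrak{Sp}_g(\ZZ)$, the explicit unpacking of full faithfulness for the bottom arrow), but these are exactly the points the paper either states tersely or leaves implicit in the paragraphs preceding the proposition.
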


\begin{remark}\label{rem:stabmaps}
There is a monoidal structure present that we wish to explicate in view of its applications to cohomological stability \cite{chen-looijenga}. The map which assigns to two principally polarized abelian varieties  their product defines a morphism $\Acal_g\times \Acal_{g'}\to \Acal_{g+g'}$. This morphism is covered by the  map
$\XX_g\times \XX_{g'}\to \XX_{g+g'}$  which assigns to the pair $(F\subset H^g_\CC, F'\subset H^{g'}_\CC)$ the direct sum $F\oplus F'\subset H^{g+g'}_\CC$. The corresponding functor  $\Vfrak_g\times \Vfrak_{g'}\to \Vfrak_{g+g'}$ is given by  $((L\supset I), (L'\supset I'))\mapsto (L\oplus L', I\oplus I')$.
The map $\XX_g\times \XX_{g'}\to \XX_{g+g'}$ extends in an obvious manner to the Satake extensions $\XX^\bb_g\times \XX^\bb_{g'}\to \XX^\bb_{g+g'}$ and hence drops to a morphism $\Acal^\bb_g\times \Acal^\bb_{g'}\to \Acal^\bb_{g+g'}$ that extends the map $\Acal_g\times \Acal_{g'}\to \Acal_{g+g'}$ above. Its counterpart  $\Wfrak_g\times \Wfrak_{g'}\to \Wfrak_{g+g'}$ for the Giffen categories is given $(M, M')\mapsto M\oplus M'$. Indeed, the  commutative diagram on the right below has the same rational homology type as the commutative diagram on the left.
\begin{equation}\label{eqn:monoid}
\begin{CD}
\Acal_g\times \Acal_{g'} @>>> \Acal_{g+g'}\\
@V{j_g\times j_{g'}}VV  @VV{j_{g+g'}}V\\
\Acal^\bb_g\times \Acal^\bb_{g'} @>>> \Acal^\bb_{g+g'}
\end{CD}
\hskip 30mm
\begin{CD}
\Vfrak_g\times \Vfrak_{g'} @>>> \Vfrak_{g+g'}\\
@V{F_g\times F_{g'}}VV  @VV{F_{g+g'}}V\\
\Wfrak_g\times \Wfrak_{g'} @>>> \Wfrak_{g+g'}\\
\end{CD}
\end{equation}
By taking $g'=1$ and choosing a point of $\Acal_1$ resp.\  the element $(H,I)$, where $I$ is the span of the first basis vector of $H$, the above diagrams become the `stabilization maps'
\begin{equation}\label{eqn:stable}
\begin{CD}
\Acal_g@>>> \Acal_{g+1}\\
@V{j_g}VV  @VV{j_{g+1}}V\\
\Acal^\bb_g @>>> \Acal^\bb_{g+1}
\end{CD}
\hskip 30mm
\begin{CD}
\Vfrak_g@>>> \Vfrak_{g+1}\\
@V{F_g}VV  @VV{F_{g+1}}V\\
\Wfrak_g @>>> \Wfrak_{g+1}\\
\end{CD}
\end{equation}
The homotopy type of the maps on the right hand side do not depend on the point we choose, for $\Acal_1$ is isomorphic to the affine line and hence connected. 
\end{remark}

\subsection*{The homotopy type of the extended period map} The  map which assigns to a compact Riemann surface of genus $g>1$  its Jacobian as a principally polarized abelian variety defines a \emph{period map} $\Jscr:\Mcal_g\to \Acal_g$. If $S_g$ is a  closed connected oriented surface, then the $\QQ$-homotopy type of $\Jscr$  is represented by the map on classifying spaces of the group homomorphism $\G (S)\to \Sp (H_1(S))$.  Mumford observed that the  period map $\Jscr:\Mcal_g\to \Acal_g$ extends to a morphism $\Jscr^\bb:\Mcalbar_g\to \Acal_g^\bb$. It has the property that the preimage of a  stratum of $\Acal_g^\bb$ is a closed union of strata of $\Mcalbar_g$.

\begin{proposition}\label{prop:perfecttorelli}
Let $S_g$ be a  closed connected oriented surface of genus $g>1$.  Let  $P: \Cfrak^*(S_g) \to \Wfrak_g$ be the functor which assigns to an element $\sigma$ of the augmented curve complex $\Ccal^*(S_g)$, the quotient  $\overline H_1(S_g\ssm \sigma)$ of the quasi-symplectic lattice $H_1(S_g\ssm \sigma)$ by its radical (or equivalently, the image of $H_1(S_g\ssm \sigma)\to H^1(S_g\ssm \sigma)$). The restriction of this functor to the initial object $\emptyset$ of $\Cfrak^*(S_g)$ gives the symplectic representation  $P_\emptyset: \G (S)\to\Sp (H_1(S))$  and   the stacky homotopy type of the square on the left below  is obtained by applying the classifying space functor to the square on the right:
\[
\begin{CD}
\Mcal_g @>{\Jscr}>> \Acal_g\\
@VVV  @VVV\\
\Mcalbar_g @>{\Jscr^\bb}>> \Acal^\bb_g
\end{CD}
\hskip 30mm
\begin{CD}
\G (S) @>{P_\emptyset}>> \Sp (H_1(S))\\
@VVV  @VVV\\
\Cfrak^*(S_g) @>{P}>> \Wfrak_{\Sp (H_1(S))}
\end{CD}
\]
\end{proposition}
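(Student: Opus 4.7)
The plan is to lift $\Jscr^\bb$ to a morphism between the stratified models used in the proofs of Theorems \ref{thm:mghomotopy} and \ref{thm:main}, and then identify the functor on indexing categories that it induces.

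The geometric input is classical: after choosing a symplectic isomorphism $H_1(S;\ZZ)\cong H^g$, the period map $\Jscr$ lifts to a $\G(S)$-equivariant holomorphic map $\widetilde{\Jscr}:\Tscr(S)\to\XX_g$, and $\Jscr^\bb$ lifts to an equivariant continuous map $\overline{\Jscr}:\overline\Tscr(S)\to\XX^\bb_g$ that sends each Teichm\"uller stratum $\Tscr(S\ssm\sigma)$ into the Baily-Borel stratum $\XX(P_{I_\sigma})$, where $I_\sigma\subset H_1(S;\ZZ)$ is the primitive isotropic sublattice spanned by the homology classes of the components of $\sigma$. On that stratum $\overline{\Jscr}$ is, up to canonical identifications, the classical period map for $\overline H_1(S\ssm\sigma)\cong I_\sigma^\perp/I_\sigma$.

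The first step is to verify that for each $\sigma\in\Ccal^*(S)$ the regular neighborhood $U_\sigma$ of $\Tscr(S\ssm\sigma)$ used in the proof of Theorem \ref{thm:mghomotopy} can be shrunk (see Remark \ref{rem:refininggl}) so that $\overline{\Jscr}(U_\sigma)\subset U^\bb_{K_{P_{I_\sigma}}}$ for $\Sp(H^g)$-invariant cocores $K_{P_{I_\sigma}}\subset C_{P_{I_\sigma}}$ chosen large enough. The second step is to check that the symplectic representation $P_\emptyset$ maps $\G(S)_\sigma$ into $\Sp(H^g)\cap P_{I_\sigma}$ and sends the link subgroup $\ZZ^{v(\sigma)}\subset\G(S)_\sigma$ (generated by Dehn twists along the components of $\sigma$) into $\Sp(H^g)\cap P^\ell_{I_\sigma}$; this holds because a Dehn twist along a component of $\sigma$ acts on $H_1(S)$ by a transvection that preserves $I_\sigma^\perp$ and acts trivially on $I_\sigma^\perp/I_\sigma$. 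Together these two verifications produce a morphism of Grothendieck-Leray atlases and, by Scholium \ref{thm:vemcov} together with the functoriality built into Theorem \ref{thm:emcov2}, a functor $\Cfrak^*(S)\to\Wfrak_{\Sp(H^g)}$ that realises the stacky homotopy type of $\Jscr^\bb$; composition with the equivalence $\Wfrak_{\Sp(H^g)}\xrightarrow{\cong}\Wfrak_g$, $I\mapsto I^\perp/I$, from the preceding proposition yields exactly $P$, since on objects $\sigma\mapsto I_\sigma^\perp/I_\sigma=\overline H_1(S\ssm\sigma)$, and on a morphism $[\gamma]:\sigma\to\sigma'$ the inclusion $\gamma I_\sigma\subset I_{\sigma'}$ provides the required primitive isotropic sublattice of $\overline H_1(S\ssm\sigma)$ together with the symplectic isomorphism of its symplectic quotient with $\overline H_1(S\ssm\sigma')$.

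The main obstacle is the first step: controlling the $\overline{\Jscr}$-image of a neighborhood $U_\sigma$ described in Teichm\"uller-theoretic terms (e.g.\ via Fenchel--Nielsen or plumbing coordinates) by cocore neighborhoods of $\XX(P_{I_\sigma})$ in $\XX^\bb_g$. This is essentially the Hodge-theoretic fact that on a curve degenerating to a nodal curve along $\sigma$, the imaginary periods along the vanishing cycles tend to $+\infty$ while the residual periods remain bounded, so that the image period matrices do land in arbitrarily deep cocore neighborhoods. Once this is established, the matching of the categorical data and the commutativity of the square of classifying spaces are forced by the functoriality of the Grothendieck-Leray construction.
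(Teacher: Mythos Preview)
Your proposal is correct and follows essentially the same approach as the paper's (sketched) proof: lift $\Jscr^\bb$ to an equivariant stratified map $\overline\Tscr(S_g)\to\XX(H_1(S_g))^\bb$, arrange that the Teichm\"uller-side open cover refines the preimage of the cocore cover on the symplectic side, and read off the induced functor between the indexing categories. You supply more detail than the paper does (the explicit isotropic lattice $I_\sigma$, the transvection check for the link subgroups, and the Hodge-theoretic reason the cocore condition holds), but the strategy is identical.
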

\begin{proof}We confine ourselves to the basic idea of the proof. First note that the  period map lifts to a map $\Tscr (S_g)\to \XX(H_1(S_g))$. This extends to a continuous map $\overline{\Tscr} (S_g)\to \XX(H_1(S_g))^\bb$ which on the stratum $\Tscr (S_g\ssm\sigma)$ it is given by first mapping $\Tscr (S\ssm\sigma)$ to the Teichm\"muller space of the (possibly disconnected) surface obtained from $S\ssm\sigma$ by filling in all the punctures and then applying the period map on each connected component. We can arrange that the open cover of  $\overline{\Tscr} (S_g)$ that was used to define $\Cfrak^*(S_g)$
refines the preimage of the open cover of $\XX(H_1(S_g))^\bb$ that was used to define its Charney-Lee category.
The proposition then follows.
\end{proof}

\section{The homotopy type of a toroidal compactification}\label{section:toroidal}

\subsection*{The parabolic cone}
We place ourselves in the setting of the previous section. Let us first recall from \cite{amrt} how a toroidal compactification is defined. 
Let  $\gfrak$ stand for the $\QQ$-Lie algebra of $\Gcal$ and regard
$C_P$ as a  cone in $\gfrak(\RR)$. Then any element of  $\gfrak(\QQ)$ in the closure of $C_P$ lies in a \emph{unique} $C_Q$ with $Q\le P$ and if we
define the \emph{parabolic cone} as $C(\gfrak):=\cup_{P\in \Pcal^*_{\max}(\Gcal)} C_P$ and define the 
\emph{face (of $C(\gfrak)$) associated to $P$} as $C_P^+:=\cup_{Q\le P} C_Q$, then
\begin{enumerate}
\item [(a)] $C(\gfrak):=\sqcup_{P\in \Pcal^*_{\max}(\Gcal)} C_P$ (the union is disjoint),
\item [(b)] $C_P^+$ is the relative closure of $C_P$ in $C(\gfrak)$ and $P\le Q$ if and only if $C_P^+\le C_Q^+$, 
\item [(c)]  two faces  intersect in a face. 
\end{enumerate}
So the faces of $C(\gfrak)$ are in bijective correspondence with the elements of $\Pcal^*_{\max}(\Gcal)$ and the flags of faces that are not reduced to $\{0\}$ are in bijective correspondence with simplices of the Tits building of $\Gcal$.

For every $P\in \Pcal^*_{\max}(\Gcal)$, the group $\G\cap P$ acts via an arithmetic subgroup of 
$L^\ell_{P}$ on $\ufrak_P$  and preserves $C_P^+$. It is known to have as fundamental domain in $C_P^+$ a rational polyhedral cone (i.e., the convex cone spanned by a finite subset of $\ufrak_P(\QQ)$). For example, if $\phi\in 
\ufrak_P^*$ is such that $\phi$ is positive on $\overline C_P-\{0\}$, then the set of $x\in C_P^+$ with 
$\phi (x)\le \phi (\gamma x)$ for all $\gamma\in \G\cap P$ is a rational polyhedral cone that is also a fundamental domain for  $\G\cap P$.
So if $\Sigma_P$ is a $\G\cap P$-invariant decomposition of $C_P^+$ into  rational polyhedral cones, then it induces one on each of its faces $C_Q^+$, $Q\le P$. 

\subsection*{Admissible decompositions of the parabolic cone}
Let $\Sigma$ be a $\G$-invariant decomposition of $C(\gfrak)$ into a rational polyhedral cones (such decomposition is said to be \emph{$\G$-admissible}).
This determines a  toroidal  extension of $\XX^\Sigma$ of $\XX$ which is locally like the one we have for  the  extension described in  the torus case \ref{example:torus} and is at the same time very much in the spirit of  the Satake-Baily-Borel extension. The difference with the latter is that the projections  $\pi_P$ are replaced by projections  $\XX\to \XX(\sigma)$ indexed by the cones $\sigma\in\Sigma$  for which the topology is easier to understand.  A fiber of this projection  is an orbit of the semigroup $\exp (\langle  \sigma\rangle_\RR +\sqrt{-1}(\langle  \sigma\rangle_\RR\cap C_P)$ acting on $\check{\XX}$. Let
$\pi^{\{0\}}_\sigma : \XX\to \XX(\sigma)$ denote the formation of this orbit space. Then $\XX(\sigma)$ has the structure of  a complex manifold for which  $\pi^{\{0\}}_\sigma$ is a holomorphic map.  The $\G$-stabilizer $\G_\sigma$ of $\sigma$ acts on $\XX(\sigma)$ with a  kernel that contains the free abelian group $\G\cap \exp (\langle  \sigma\rangle_\RR)$ as a subgroup of finite index. We shall take $\G^\ell_{\XX(\sigma)}:=\G\cap \exp (\langle  \sigma\rangle_\RR)$.

For any $\sigma\in \Sigma$, we denote by $P(\sigma)$ the  member of $\Pscr_{\max}^*(\Gcal)$  with the property that $\sigma\cap C_{P(\sigma)}\not=\emptyset$. Then $\pi^G_{P(\sigma)}$ factors through $\pi^{\{0\}}_\sigma$. This is in fact a principal fibration over $\XX(P(\sigma))$ with structure group is an extension of the vector group $V_{P(\sigma)}$  by the vector group $U_{{P(\sigma)},\CC}/\exp (\langle  \sigma\rangle_\CC)$). In particular, $\XX(\sigma)$ is contractible. Notice that $\pi^{\{0\}}_{\{0\}}:\XX\to \XX(\{ 0\})$ is the identity map.

 For $\sigma\ge  \tau$ we have a factorization of $\pi^{\{0\}}_\sigma$ over $\pi^{\{0\}}_\tau$. The factor $\pi^\tau_\sigma: \XX(\sigma)\to 
\XX(\tau)$ is a holomorphic fibration  and  we have $\pi^\upsilon_\tau\pi^\tau_\sigma=\pi^\upsilon_\sigma$ when  $\sigma\le\tau\le \upsilon$.
We then proceed as before  by putting  $\Star_\Sigma \XX(\sigma):= \coprod_{\tau\le \sigma} \XX(\tau)$ so that the $\pi^\tau_\sigma$ combine to give 
a retraction $\pi_\sigma:\Star_\Sigma \XX(\sigma)\to\XX(\sigma)$.  We let $\XX^\Sigma$  be the disjoint union of the $\XX(\sigma)$ and equip this 
union with the topology that is similarly defined as in the Satake-Baily-Borel  setting,  the role of the cocores   then being taken by an open 
convex subsets $K\subset C_P$ that can be written as the Minkowski sum of $\sigma$ and a nonempty bounded open subset of $C$.  
The  $\G$-orbit space $\G\bs \XX^\Sigma$ is a compact Hausdorff space  (which in fact underlies the structure of a normal analytic space) 
and the obvious  $\G$-equivariant  map $\XX^\Sigma\to \XX^\bb$ is continuous and yields a  surjective map $\G\bs\XX^\Sigma\to \G\bs\XX^\bb$ 
of compact Hausdorff spaces (which in fact underlies a morphism in the complex-analytic category).

The  group $\G (\XX(\sigma)):=\G_\sigma/(\G\cap \exp (\langle  \sigma\rangle_\RR))$ acts properly on $\XX(\sigma)$. 

\begin{theorem}\label{thm:main2}
Let $\Tfrak^\Sigma_\G$ be the category with objects the members of $\Sigma$ and for which a morphism $\tau \to \sigma$ is  given by a right coset 
$\G\cap \exp (\langle  \sigma\rangle_\RR)\gamma\in (\G\cap \exp (\langle  \sigma\rangle_\RR)\bs \Gamma$ for which 
$(\G\cap \exp (\langle  \sigma\rangle_\RR))\gamma\tau \subset\sigma$. Then the full subcategory of $\Tfrak^\Sigma_\G$ defined by the  object  
$\{ 0\}$ can be identified with $\G$ and we have a natural functor  $\Tfrak^\Sigma_\G\to\Wfrak_\G$  defined by $\Pi\mapsto P(\Pi)$. If we apply the 
classifying space construction to the functors  $\G \subset \Tfrak^\Sigma_\G\to \Wfrak_\G$ we recover the stacky homotopy type of the morphisms 
$\G\bs \XX\subset \G\bs \XX^\Sigma\to \G\bs \XX^\bb$. \end{theorem}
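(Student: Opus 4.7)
The plan is to follow the strategy of Theorem \ref{thm:main}: apply Theorem \ref{thm:emcov2} to the stratified space $(\XX^\Sigma, \{\XX(\sigma)\}_{\sigma \in \Sigma})$ with link group $\G^\ell_{\XX(\sigma)} := \G \cap \exp(\langle\sigma\rangle_\RR)$. The stratification order has $\XX(\sigma) \le \XX(\tau)$ iff $\sigma \ge \tau$ in $\Sigma$, so a chain of strata $S_0 > S_1 > \cdots > S_n$ in the sense of Theorem \ref{thm:emcov2} corresponds to a chain $\sigma_0 < \sigma_1 < \cdots < \sigma_n$ in $\Sigma$; the required containment $\G^\ell_{\XX(\sigma_0)} \subset \G^\ell_{\XX(\sigma_i)}$ then follows from $\langle\sigma_0\rangle_\RR \subset \langle\sigma_i\rangle_\RR$. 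The category $\Sfrak$ produced by Theorem \ref{thm:emcov2} is, by inspection, exactly $\Tfrak^\Sigma_\G$.

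First I would choose, $\G$-equivariantly, for each $\sigma \in \Sigma$ an open convex subset $K_\sigma \subset C_{P(\sigma)}$ of the form Minkowski sum of $\sigma$ with a bounded open subset of $C_{P(\sigma)}$, and assemble these (via the projections $\pi^{\{0\}}_\tau$ for $\tau \le \sigma$) into an open $\G_\sigma$-invariant neighborhood $U^\Sigma_{\XX(\sigma)}$ of $\XX(\sigma)$ in $\XX^\Sigma$. This is precisely the kind of neighborhood that defines the topology on $\XX^\Sigma$. Properties (i) and (ii) of Theorem \ref{thm:emcov2} then follow from the combinatorics of $\Sigma$ and the $\G$-equivariance of the choice, while (iii) uses that the image of $\G_\sigma/\G^\ell_{\XX(\sigma)}$ in $\aut(\XX(\sigma))$ is an arithmetic subgroup acting properly with a finite-index subgroup acting freely.

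The main obstacle is property (iv). For a chain $\sigma_0 < \sigma_1 < \cdots < \sigma_n$ in $\Sigma$, one must show that $\G^\ell_{\XX(\sigma_0)} \bs \bigcap_{i=0}^n U^\Sigma_{\XX(\sigma_i)}$ is contractible. Exactly as in the proof of Theorem \ref{thm:main}, I would first retract onto the part of this intersection lying over $\XX(\sigma_0)$ and then reduce to the cone-theoretic statement that an appropriate intersection $K \subset C_{P(\sigma_n)}$ of the pulled-back $K_{\sigma_i}$ is contractible after quotienting by $\exp\langle\sigma_0\rangle_\RR$. The set $K$ is open, convex in the relevant directions, and invariant under translations by $\sigma_0$ and by a convex open subcone of $C_{P(\sigma_n)}$; iterated application of Lemma \ref{lemma:contractiblecocore}, together with the fact that the nonlinear projection $C_P \to C_Q \times C_{P_{/Q}}$ for $Q \le P$ forms an $R_u(Q)$-orbit space with affine fibers, should give the contractibility. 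The delicate point is arranging the $K_\sigma$ uniformly enough over $\Sigma$ that all such chain intersections have the requisite shape; this is where the rational polyhedrality of $\Sigma$ and its compatibility with the parabolic cone structure enter.

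Granting (i)--(iv), Theorem \ref{thm:emcov2} yields $|B\Tfrak^\Sigma_\G|$ as a stacky homotopy type for $\G\bs\XX^\Sigma$. The full subcategory on $\{0\} \in \Sigma$ is $\G$ (since $\exp\langle 0\rangle_\RR$ is trivial and every $\gamma\in\G$ fixes $\{0\}$), so the inclusion $\G\bs\XX \subset \G\bs\XX^\Sigma$ is reproduced on stacky homotopy types by applying $B$ to the inclusion $\G \hookrightarrow \Tfrak^\Sigma_\G$. The assignment $\sigma \mapsto P(\sigma)$ is a well-defined functor $\Tfrak^\Sigma_\G \to \Wfrak_\G$: a morphism $[\gamma]: \tau \to \sigma$ in $\Tfrak^\Sigma_\G$ satisfies $\gamma\tau \in \Sigma$ with $\gamma\tau \le \sigma$, hence ${}^\gamma P(\tau) = P(\gamma\tau) \le P(\sigma)$; and since $\sigma \subset \ufrak_{P(\sigma)}$ we have $\exp\langle\sigma\rangle_\RR \subset U_{P(\sigma)} \subset P(\sigma)^\ell$, so the coset $(\G \cap \exp\langle\sigma\rangle_\RR)\gamma$ descends to $(\G \cap P(\sigma)^\ell)\gamma$ in $\Wfrak_\G$. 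By choosing the $K_\sigma$ inside $\G$-invariant cocores $K^{\bb}_{P(\sigma)} \subset C_{P(\sigma)}$ used to construct the Baily-Borel Grothendieck-Leray atlas in the proof of Theorem \ref{thm:main}, the map $\XX^\Sigma \to \XX^\bb$ sends $U^\Sigma_{\XX(\sigma)}$ into the corresponding Baily-Borel neighborhood, yielding a morphism of Grothendieck-Leray atlases that realizes $\Tfrak^\Sigma_\G \to \Wfrak_\G$ on classifying spaces. Scholium \ref{thm:vemcov} then identifies the resulting map of stacky homotopy types with that of $\G\bs\XX^\Sigma \to \G\bs\XX^\bb$.
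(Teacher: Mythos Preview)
The paper states Theorem \ref{thm:main2} without proof; the setup in Section \ref{section:toroidal} (the projections $\pi^\tau_\sigma$, the contractibility of $\XX(\sigma)$, the Minkowski-sum neighborhoods playing the role of cocores, and the properness of the $\G(\XX(\sigma))$-action) makes it plain that the intended argument is exactly the one you outline: run Theorem \ref{thm:emcov2} with link groups $\G^\ell_{\XX(\sigma)}=\G\cap\exp(\langle\sigma\rangle_\RR)$, in direct analogy with the proof of Theorem \ref{thm:main}. Your identification of the resulting category with $\Tfrak^\Sigma_\G$, of its full subcategory on $\{0\}$ with $\G$, and your construction of the functor $\Tfrak^\Sigma_\G\to\Wfrak_\G$ via $\sigma\mapsto P(\sigma)$ (using $\exp\langle\sigma\rangle_\RR\subset U_{P(\sigma)}\subset P(\sigma)^\ell$) are all correct and match what the paper asserts.

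Your honest flag on property (iv) is well placed: this is the only step that is not routine. The template from Theorem \ref{thm:main} does carry over, but with one simplification and one extra wrinkle. The simplification is that after retracting onto $\XX(\sigma_0)$ via $\pi_{\sigma_0}$, the group $\G^\ell_{\XX(\sigma_0)}$ acts trivially there, so no quotient is needed at that stage. The wrinkle is that the sets $K_{\sigma_i}$ are not cocores but Minkowski sums $\sigma_i+(\text{bounded open})$ inside $C_{P(\sigma_i)}$; since all $\sigma_i$ lie in the single cone $C_{P(\sigma_n)}$, the intersection $\bigcap_i (\Iscr^{P(\sigma_n)}_{P(\sigma_i)})^{-1}K_{\sigma_i}$ is an open subset of $C_{P(\sigma_n)}$ invariant under translation by the open convex cone $\sigma_0^\circ$ (the relative interior of $\sigma_0$), and Lemma \ref{lemma:contractiblecocore} applies directly with $C=\sigma_0^\circ$. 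So in fact (iv) here is somewhat \emph{easier} than in the Baily-Borel case: one does not need the $R_u(Q)$-orbit-space argument or the double application of the lemma. Once you observe this, the ``uniformity'' concern you raise largely dissolves, since the $\G$-admissibility of $\Sigma$ already forces the required compatibilities among the $K_{\sigma_i}$.
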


\subsection*{An example: the perfect cone compactification.} 
We take $\G=\Sp (H^g)$. The perfect cone  compactification of $\Acal_g^\perf$ of $\Acal_g$ is an example of a toroidal compactification as above. 

\begin{definition}
Given a lattice $L$, denote by $\sym_2(L)$ the symmetric quotient of $L\otimes L$ and by $\Sq(L)$ the collection of \emph{pure primitive squares} in $\sym_2(L)$, i.e.,  elements of the form $v^2$ with 
$v\in L$  primitive. We say that a finite subset $\Pi\subset \Sq(L)$ is \emph{perfect} if it is the intersection of $\sym_2(L)$ with  a face of the convex hull of $\Sq(L)$ in $\sym_2 L_\RR$, agreeing that the empty set is also perfect.
\end{definition}

A duality property for locally polyhedral convex sets (the convex hull of $\Sq(L)$ is one) implies that this is also equivalent to the existence of a linear form 
$\sym_2(L)\to \RR$ with the property that it is $\ge 1$ on each pure primitive square, with the value $1$ taken if and only if the pure primitive square
is in $\Pi$. By regarding  such a linear form as a  quadratic form on $L_\RR$, we see that a subset $\Pi\subset \Sq(L)$ is perfect if and only if there exists a
positive definite quadratic form $q$ on $L$  such that $v^2\in\Pi$ if and only if  $q|L-\{ 0\}$ takes its minimal value in $v$.  With this characterization it is easy to show that (i) $\Pi$ is the vertex set of its convex hull in  $\sym_2 L_\RR$: no element of $\Pi$ is a convex linear combination of the others, and 
(ii) the property of $\Pi$ being perfect is in a sense independent of $L$: it only depends on smallest sublattice  $I\subset L$ with $\Pi\subset \Sq (I)$ (we denote that sublattice $I_\Pi$). If we denote by $J_\Pi\subset L_\RR$ the biggest subspace of $L_\RR$  such that 
$\sym_2(J_\Pi)\subset \la\Sq(L)\ra_\RR$, then the  obvious inclusions  
$\sym_2J_\Pi\subset  \la \Pi\ra_\RR \subset \sym_2I_\Pi$ can all be strict. We denote by  $\sigma_\Pi\subset \sym_2L$ the cone spanned by $\Pi$.

Now take $L= H^g$. If  $I_\Pi$ is isotropic, then 
$\sigma_\Pi$ is contained in the parabolic  cone of $\symp(H^g_\RR)$ (via the identification of $\sym_2(H_\RR^g)$ with $\symp(H_\RR^g)$)
and the collection of such $\sigma_\Pi$ makes up a $\Sp (H^g)$-admissible decomposition  $\Sigma_\perf$ of this parabolic cone (see \cite{namikawa}). 
We thus get a toroidal  extension $\XX_g^\perf:=\XX_g^{\Sigma_\perf}$ of $\XX_g$ and a corresponding toroidal  compactification 
$\Acal_g^\perf:=\Sp (H^g)\bs \XX_g^\perf$ of $\Acal_g$.

For $\Pi$ as above,  the natural map $\XX_g\to\XX(\Pi)$ is the passage to the orbit space with respect to 
$\exp(\la \Pi\ra_\CC)$ and hence only depends on $\la \Pi\ra_\RR$. 
A point of $\XX_g$ can be understood as giving a pure polarized Hodge structure on $H^g$ of weight 1. But we can also regard it as defining 
a mixed Hodge structure on $H^g$  with weight filtration $W_{-1}=\{0\}\subset W_0= I_{\Pi,\QQ}\subset W_1= I_{\Pi,\QQ}^\perp\subset W_2=H^g_\QQ$.
In an algebro-geometric  context the image  $F_\Pi$ of $F$ in $\XX(\Pi)$ is often considered as representing a $\exp(\la \Pi\ra_\CC)$-orbit of 
mixed  Hodge structures, rather than  of pure Hodge structures. The fact that we only care about this orbit implies  that $F_\Pi$  only depends on
$F\cap J^\perp_{\Pi, \CC}$, or equivalently, on the image of $F$ in $H^g_\CC/J_{\Pi, \CC}$. So we can also view $F_\Pi$ as an orbit of
mixed  Hodge structures on $J^\perp_\Pi$ (with $W_2=J^\perp_{\Pi, \QQ}$).

The stratum $\XX(\Pi)$ has the structure of an (iterated) affine bundle over 
$\XX(I_\Pi^\perp/I_\Pi)$);  its complex codimension in $\XX_g^\perf$  is equal to $\dim\la \Pi\ra_\RR$. 

The associated category $\Tfrak^\perf_g:=\Tfrak^{\Sigma_\perf}_{\Sp (H^g)}$ has as its objects the perfect subsets $\Pi\subset \sym_2 H^g$ for which $I_\Pi$ is isotropic.
A morphism $\Pi\to \Pi'$ is given by a $\g\in \Sp (H^g)$ such that $\g(\Pi)\subset \Pi'$ with the understanding that $\g'\in \Sp (H^g)$
defines the same morphism if and only if $\g'\g^{-1}\in \exp(\la \Pi '\ra_\QQ)$. The functor $\Tfrak^\perf_{\Sp (H^g)}\to 
\Wfrak_{\Sp (H^g)}$  that incarnates the $\QQ$-homotopy class of $\Acal_g^\perf\to \Acal^\bb_g$ is given by $\Pi\mapsto I_\Pi$.

\begin{remark}
There is also an analogue $\Wfrak^\perf_g$ of Giffen's category: an object of $\Wfrak^\perf_g$ is  a pair $(L,\Pi)$, where $L$ is  a quasi-unimodular symplectic lattice (i.e., $L/\rad (L)$  is unimodular)  with $\rk (\rad (L))+\rk L=2g$ and $\Pi$ is a perfect subset of $\Sq(\rad(L))$ such that $I(\Pi)=\rad(L)$. A morphism $(L,\Pi)\to (L', \Pi')$ is given by a primitive symplectic embedding $\phi: L'\to L$ (note the contravariance) such that $\phi^{-1}\Pi\subset \Pi'$.
Then $|B\Wfrak_g^\perf|$ is $\QQ$-homotopy equivalent to $\Acal_g^\perf$. We have a functor $P_g: \Wfrak^\perf_g\to \Wfrak_g$ which sends $(L, \sigma)$ to $\bar L$ (any  $\phi: L'\to L$ as above  defines a primitive isotropic sublattice $K\subset  L/\rad (L)$  and an isomorphism  $L'/\rad (L')\xrightarrow{\cong} K^\perp/K$; the inverse of the latter defines a morphism in $\Wfrak_g$) and the resulting map $|BP_g|: |B\Wfrak^\perf_g|\to |B\Wfrak_g|$ incarnates the $\QQ$-homotopy class of $\Acal_g^\perf\to \Acal^\bb_g$.

The functor $\mathfrak{i}: \Wfrak^\perf_g\to \Wfrak^\perf_{g+1}$ defined by $(L, \Pi)\mapsto (L\oplus \ZZ^2,\Pi)$ (where $\ZZ^2$ is equipped with its 
standard symplectic form) reproduces the $\QQ$-homotopy class of $i_E: \Acal_g^\perf\to \Acal_{g+1}^\perf$ defined by multiplication with a fixed elliptic 
curve $E$. In fact, this is the restriction of  a morphism 
$\Acal_1^\perf\times\Acal_g^\perf\to \Acal_{g+1}^\perf$ which over the cusp of $\Acal_1^\perf=\Acal_1^\bb$ is the map
$i_\infty : \Acal_g^\perf\to \Acal_{g+1}^\perf$  that normalizes the boundary. So these maps are homotopic.

The map $i_E$ is transversal  to the strata and has a well-defined normal bundle of rank $g+1$ in the orbifold sense (it is a direct sum  of  the  dual of the Hodge bundle on $\Acal_g^\perf$  and  the trivial line bundle that comes from varying the elliptic curve) and so we have also defined a  Gysin map: 
\[
i_E^!: H_{2g+2+k}(\Acal_{g+1}^\perf; \QQ)\to H_{2g+2+k}(\Acal_{g+1}^\perf,\Acal_{g+1}^\perf-i_{E}(\Acal^\perf_{g}); \QQ)\cong 
H_{k}(\Acal_g^\perf, \QQ) 
\]
(a natural map followed by a Thom isomorphism). This map, which appears in the work of Grushevsky-Hulek-Tommasi \cite{ght},  is of course geometrically given as transversal pull-back along $i_E$.  But it is not clear to us whether we can phrase this in terms of the categories $\Wfrak^\perf_g$ and $\Wfrak^\perf_{g+1}$.
\end{remark}

\end{document}